\theoremstyle{plain}
\newtheorem{theorem}{Theorem}[section]
\newtheorem{proposition}[theorem]{Proposition}
\newtheorem{corollary}[theorem]{Corollary}
\theoremstyle{definition}
\theoremstyle{remark}
\renewcommand{\thefootnote}{\arabic{footnote}}
\newcommand{\C}[1]{\ensuremath{{\mathcal C}^{#1}}} %espace C^1
\def\R{\mathbb R}% tap so thuc
\def\C{\mathbb C}% tap so phuc
\def\al{\alpha}% alpha
\def\om{\omega}% omega
\def\Om{\Omega}% Omega
\def\be{\beta}% beta
\def\ga{\gamma}% gamma
\def\de{\delta}% delta
\def\De{\Delta} % Delta
\def\si{\sigma}% sigma
\def\lam{\lambda}% lambda
\def\vphi{\varphi}% varphi
\def\ep{\epsilon}% epsilon
\def\na{\nabla}% nabla
\def\pa{\partial}% dao ham rieng
\def\la{\langle} %goc trai
\def\ra{\rangle} %goc phai
\def\lt{\left}% trai
\def\rt{\right}% phai
\def\i0i{\int_0^\infty}
\def\CKNP{(\text{{\bf CKN}})_P}
\def\sf{\text{\rm supp}f}
\def\Vol{\text{Vol}}
\def\bD{\text{\bf D}}
\numberwithin{equation}{section}
\title{Sharp Caffarelli--Kohn--Nirenberg inequalities on Riemannian manifolds: the influence of curvature}
\author{Van Hoang Nguyen%\footnote{Institute of Research and Development, Duy Tan University, Da Nang, Vietnam} 
\footnote{Institut de Math\'ematiques de Toulouse, Universit\'e Paul Sabatier, 118 Route de Narbonne, 31062 Toulouse c\'edex 09, France.}
}
\begin{document}
\maketitle

%% Classification and key words; note that the 2010 classification is used:

\renewcommand{\thefootnote}{}

\footnote{Email: \href{mailto: Van Hoang Nguyen <van-hoang.nguyen@math.univ-toulouse.fr>}{van-hoang.nguyen@math.univ-toulouse.fr}, \href{mailto: Van Hoang Nguyen <vanhoang0610@yahoo.com>}{vanhoang0610@yahoo.com}}

\footnote{2010 \emph{Mathematics Subject Classification\text}: 26D10, 31C12, 53C20, 53C21.}

\footnote{\emph{Key words and phrases\text}: Caffarelli--Kohn--Nirenberg inequalities, sharp constant, Riemannian manifold, curvature}

\renewcommand{\thefootnote}{\arabic{footnote}}
\setcounter{footnote}{0}

\begin{abstract}
We first establish a family of sharp Caffarelli--Kohn--Nirenberg type inequalities on the Euclidean spaces and then extend them to the setting of Cartan--Hadamard manifolds with the same best constant. The quantitative version of these inequalities also are proved by adding a non-negative remainder term in terms of the sectional curvature of manifolds. We next prove several rigidity results for complete Riemannian manifolds supporting the Caffarelli--Kohn--Nirenberg type inequalities with the same sharp constant as in $\R^n$ (shortly, sharp CKN inequalities). Our results illustrate the influence of curvature to the sharp CKN inequalities on the Riemannian manifolds. They extend recent results of Krist\'aly to a larger class of the sharp CKN inequalities.
\end{abstract}

\section{Introduction}
Let us start by recalling the celebrated interpolation inequalities of order one due to Caffaralli, Kohn and Nirenberg \cite{CKN} (nowaday, called Caffarelli--Kohn--Nirenberg (shortly, CKN) inequalities): let $n\geq 1$ and let $p,q,r, \al, \beta, \gamma, \de$ and $\sigma$ be real number such that
\begin{equation}\label{eq:CKNcond1}
p, q\geq 1,\quad r >0,\quad a \in [0,1],
\end{equation}
and
\begin{equation}\label{eq:CKNcond2}
\frac1p +\frac\al n >0,\quad \frac1q +\frac\beta n>0,\quad \frac1r +\frac\ga n >0,
\end{equation}
where
\begin{equation}\label{eq:CKNcond3}
\gamma = \de \sigma + (1-\de)\beta.
\end{equation}
Then there exists a positive constant $C$ such that the following inequality holds for any function $f\in C^\infty_0(\R^n)$ 
\begin{equation}\label{eq:CKN}
\lt(\int_{\R^n} |f|^r|x|^{r\ga} dx\rt)^{\frac1r} \leq C\lt(\int_{\R^n} |f|^p|x|^{\al p} dx\rt)^{\frac{\de}p} \lt(\int_{\R^n} |f|^q|x|^{\beta q} dx\rt)^{\frac{1-\de}q}
\end{equation}
if and only if the following conditions hold
\begin{equation}\label{eq:CKNbalance}
\frac1r +\frac\ga n = \de\lt(\frac1p +\frac{\al -1}n\rt) + (1-\de)\lt(\frac1q +\frac\be n\rt),
\end{equation}
(this is dimensional balance)
\begin{equation}\label{eq:CKNcond4}
\al -\si \geq 0 \quad\text{ if } \quad \de >0,
\end{equation}
and
\begin{equation}\label{eq:CKNcond5}
\al -\si \leq 1\quad\text{ if } \quad \de >0\quad \text{and}\quad \frac1r + \frac\ga n = \frac1p + \frac{\al -1}n.
\end{equation}
The CKN inequalities contain many well-known inequalities, for examples, Sobolev inequalities, Hardy inequalities, Hardy--Sobolev inequalities, the Gagliardo--Nirenberg inequalities, etc. They play an important role in theory of partial differential equations and have been intensively studied in many settings such as the stratified Lie groups, the homogeneous groups, the metric measure spaces, to Riemannian manifolds with negative curvature and to derivatives of fractional order, etc. We refer the readers to \cite{DD1,DD2,Costa,HAN,HM,Nguyen2017,ORS,RS2017carnot,RS2017L2,RSY2017Lie,RSY2017,RSY2017a,Xia} for mor detailed discussions on this subject.

It is an interesting and non-trivial problem to look for the sharp constant and extremals for the CKN inequalities. Several results are well-known in this direction. For examples, the sharp constants in the Sobolev inequalities was found independently by \cite{Aubin,Talenti}, the sharp Hardy--Sobolev inequalities was proved by Lieb \cite{Lieb}, the sharp constants in the Gagliardo--Nirenberg inequalities was established by Del Pino and Dolbeault \cite{DD1,DD2} (see also \cite{CNV} for different proof by using mass transportation technique), etc. In \cite{Xia}, the sharp constant in a subclass of CKN inequalities was find out: suppose that $r > p> 1$ and $\al, \be, \ga \in \R$ such that
\begin{equation}\label{eq:integrabilitycond}
\frac1r -\frac{\ga}n >0,\quad \frac1p -\frac\al n >0,\quad 1 -\frac\be n >0
\end{equation}
and
\begin{equation}\label{eq:balancecond}
\ga = \frac{1+ \al}{r} + \frac{p-1}{pr} \beta,
\end{equation}
then the following inequality holds for any $f\in C_0^\infty(\R^n)$
\begin{equation}\label{eq:CKNofXia}
\int_{\R^n} \frac{|f|^r}{|x|^{\ga r}} dx \leq \frac{r}{n-\ga r} \lt(\int_{\R^n} \frac{|\na f|^p}{|x|^{\al p}} dx\rt)^{\frac1p} \lt(\int_{\R^n} \frac{|f|^{\frac{p(r-1)}{p-1}}}{|x|^{\beta}} dx\rt)^{\frac{p-1}p}.
\end{equation}
Furthermore, if 
\begin{equation}\label{eq:condofXia}
n-\beta < \lt(1+ \al -\frac\be p\rt)\frac{p(r-1)}{r-p}
\end{equation}
then the inequality \eqref{eq:CKNofXia} is sharp, i.e., the constant $\frac{r}{n-\ga r}$ is the best constant in \eqref{eq:CKNofXia}, and a family of extremal functions is given by
\begin{equation}\label{eq:familyi}
f(x) = c(\lam + |x|^{1+ \al -\frac\beta p})^{\frac{p-1}{p-r}}, \quad c\in \R, \, \lam >0.
\end{equation}
In \cite{Xia}, Xia also proved a rigidity result as follows: let $(M,g)$ be a complete Riemannian manifold with nonnegative Ricci curvature, let $p, q,r, \al, \beta, \ga$ satisfy $r> p> 1$ and the conditions \eqref{eq:integrabilitycond}, \eqref{eq:balancecond} and \eqref{eq:condofXia} and let $P$ be a fixed point in $M$. If the CKN inequality \eqref{eq:CKNofXia} holds on $(M,g)$ with $|x|$ being replaced by $d(P,x)$ the geodesic distance on $M$, then $(M,g)$ is isometric to $\R^n$. This rigidity result can be included into the best constant program initiated by Aubin \cite{Aubin79} and studied by Ledoux \cite{Ledoux}, Bakry, Concordet and Ledoux \cite{Bakry}, Cheeger and Colding \cite{Cheeger}, Druet, Hebey and Vaugon \cite{Druet}, do Carmo and Xia \cite{CarXia}, Minerbe \cite{Minerbe}, Li and Wang \cite{LiWang}, Xia \cite{Xiasobolev, XiaGN,Xia}, Krist\'aly \cite{K1,K2}, Krist\'aly and Ohta \cite{KO}, etc. In the aforementioned papers, the authors show that complete Riemannian manifolds with non-negative Ricci curvature supporting some Sobolev-type inequalities should be close to Euclidean spaces whenever the constant is sufficiently close to the sharp constant in the corresponding inequality in Euclidean space. We refer the reader to the book of Hebey \cite{Hebey} for a thoroughgoing presentation of this subject.

Our origin motivation of this paper is to extend the CKN inequality \eqref{eq:CKNofXia} to a larger class of indices $p,q,r, \al, \be, \ga$. Let $\pa_r = \frac x{|x|} \cdot \na$ denote the radial derivative of functions on $\R^n$. Our first main result of this paper is as follows:

\begin{theorem}\label{myCKN}
Suppose that $n\geq 2$, $p> 1$, $r >0$ and $\al, \beta, \ga$ satisfy the conditions \eqref{eq:integrabilitycond} and \eqref{eq:balancecond}. Then the following inequalities hold for any function $f \in C_0^\infty(\R^n)$:
\item (a) If $r>1$, then we have
\begin{equation}\label{eq:posexponent}
\int_{\R^n} \frac{|f|^r}{|x|^{\ga r}} dx \leq \frac{r}{(n-\ga r)} \lt(\int_{\R^n} \frac{|\pa_r f|^p}{|x|^{\al p}} dx\rt)^{\frac1p} \lt(\int_{\R^n} \frac{|f|^{\frac{p(r-1)}{p-1}}}{|x|^{\beta}} dx\rt)^{\frac{p-1}p}.
\end{equation}
\item (b) If $r\in (0,1)$, then we have
\begin{equation}\label{eq:negexponent}
\int_{\R^n} \frac{|f|^r}{|x|^{\ga r}} dx \leq \frac{r}{(n-\ga r)} \lt(\int_{\R^n} \frac{|\pa_r f|^p}{|x|^{\al p}} dx\rt)^{\frac1p} \lt(\int_{\text{supp}f} \frac{|f|^{\frac{p(r-1)}{p-1}}}{|x|^{\beta}} dx\rt)^{\frac{p-1}p},
\end{equation}
where $\text{supp} f$ denotes the support of function $f$.

Moreover, the constant $\frac{r}{n-\ga r}$ is sharp if one of the following conditions holds:

\item (i) $1 < p < r$ and \eqref{eq:condofXia} holds. A family of extremal is given by
\begin{equation}\label{eq:extremali}
f(x) = c\lt(\lam(\om) + |x|^{1+ \al -\frac\beta p}\rt)^{\frac{p-1}{p-r}}, \quad \om = \frac{x}{|x|},
\end{equation}
where $c$ is constant and $\lam: S^{n-1}\to (0,\infty)$ such that $\int_{S^{n-1}} \lam(\om)^{\frac{p(r-1)}{p-r} + \frac{n-\beta}{1+ \al -\frac\beta p}} d\om < \infty$.
\item (ii) $0 < r < p$, $r \not=1$ and $1+ \al -\frac\beta p >0$. A family of extremal is given by
\begin{equation}\label{eq:extremalii}
f(x) = c \lt(\lam(\om) -|x|^{1+ \al -\frac\beta p}\rt)_+^{\frac{p-1}{p-r}}, \quad \om = \frac{x}{|x|}
\end{equation}
where $a_+ = \max\{a,0\}$ denotes the positive part of a real number $a$, $c$ is a constant, $\lam: S^{n-1} \to (0,\infty)$ such that $\int_{S^{n-1}} \lam(\om)^{\frac{p(r-1)}{p-r} + \frac{n-\beta}{1+ \al -\frac\beta p}} d\om < \infty$.
\item (iii) $0 < r < p$, $r \not=1$ and $1+ \al -\frac\beta p =0$. A family of extremal is given by
\begin{equation}\label{eq:extremaliii}
f(x) = c \lt(\lam(\om) -\ln |x|\rt)_+^{\frac{p-r}{p-1}},\quad \om = \frac x{|x|},
\end{equation}
where $c$ is constant, $\lam : S^{n-1} \to \R$ such that $\int_{S^{n-1}} e^{(n-\beta)\lam} d\om < \infty$
\item (iv) $0 < r < p$, $r \not=1$, $1+ \al -\frac\beta p <0$ and $n-\beta + (1+\al -\frac\beta p) \frac{p(r-1)}{p-r} >0$. A family of extremal is given by
\begin{equation}\label{eq:extremaliv}
f(x) = c\lt(|x|^{1+ \al -\frac\beta p} -\lam(\om)\rt)_+^{\frac{p-1}{p-r}},\quad \om =\frac x{|x|},
\end{equation}
where $c$ is a constant and $\lam: S^{n-1} \to (0,\infty)$ such that $\int_{S^{n-1}} \lam^{\frac{p(r-1)}{p-r} + \frac{n-\beta}{1+ \al -\frac\beta p}} d\om < \infty$.
\item (v) $r =p$ and $1+ \al -\frac\be p>0$. A family of extremal is given by
\begin{equation}\label{eq:extremalv}
f(x) = \lam (\om) e^{-c |x|^{1+ \al -\frac\beta p}},
\end{equation}
where $c>0$ is a constant and $\lam :S^{n-1} \to \R$ such that $\int_{S^{n-1}}|\lam|^{p} d\om < \infty$.

\end{theorem}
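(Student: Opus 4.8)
The plan is to reduce the inequality to a one–dimensional problem along rays through the origin, exploiting that $\pa_r f$ is precisely the derivative of $f$ in the radial variable. Writing $x = t\om$ with $t=|x|>0$ and $\om = x/|x|\in S^{n-1}$, and setting $g_\om(t)=f(t\om)$, one has $dx = t^{n-1}\,dt\,d\om$ and $\pa_r f(t\om)=g_\om'(t)$. Thus the three integrals appearing in \eqref{eq:posexponent} become, for each fixed $\om$,
\begin{equation*}
\i0i |g_\om|^r t^{n-1-\ga r}\,dt,\qquad \i0i |g_\om'|^p t^{n-1-\al p}\,dt,\qquad \i0i |g_\om|^{\frac{p(r-1)}{p-1}} t^{n-1-\be}\,dt,
\end{equation*}
integrated afterwards over $S^{n-1}$. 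It therefore suffices to establish the sharp weighted one–dimensional inequality with constant $r/(n-\ga r)$ for each $\om$ and then to patch the rays together.

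For the one–dimensional step I would integrate by parts and apply Hölder. Since $\tfrac1r-\tfrac\ga n>0$ gives $n-\ga r>0$, I write $t^{n-1-\ga r}=\tfrac1{n-\ga r}\tfrac{d}{dt}t^{n-\ga r}$ and integrate by parts; for $f\in C_0^\infty$ the boundary term at infinity vanishes by compact support and the one at the origin vanishes because $t^{n-\ga r}\to 0$. This yields
\begin{equation*}
\i0i |g_\om|^r t^{n-1-\ga r}\,dt = \frac{r}{n-\ga r}\i0i |g_\om|^{r-1}\big(-\sign(g_\om)\,g_\om'\big)\,t^{n-\ga r}\,dt \le \frac{r}{n-\ga r}\i0i |g_\om|^{r-1}|g_\om'|\,t^{n-\ga r}\,dt.
\end{equation*}
Applying Hölder with exponents $p$ and $p'=p/(p-1)$, splitting the weight as $t^{n-\ga r}=t^{(n-1-\al p)/p}\,t^{(n-1-\be)/p'}$, produces exactly the two factors on the right–hand side, and a direct check shows that the compatibility requirement on the weights collapses to the dimensional balance \eqref{eq:balancecond}; this is the algebraic heart of the argument. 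For $r\in(0,1)$ the exponent $s=\tfrac{p(r-1)}{p-1}$ is negative, so $|g_\om|^s$ is singular where $g_\om$ vanishes; here I would perform the integration by parts on $\{g_\om\neq 0\}$ (or regularize $|g_\om|^r$ by $(\ep^2+g_\om^2)^{r/2}$ and let $\ep\to 0$), which is exactly why the last factor in \eqref{eq:negexponent} is restricted to $\sf$.

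To assemble the full inequality I integrate the pointwise (in $\om$) bound over $S^{n-1}$ and apply Hölder once more, now on the sphere with the same exponents $p,p'$, to separate the two assembled factors $\big(\int_{S^{n-1}}G\,d\om\big)^{1/p}\big(\int_{S^{n-1}}H\,d\om\big)^{1/p'}$, where $G(\om)$ and $H(\om)$ denote the gradient and function radial integrals displayed above; re–expressing in Cartesian coordinates then gives \eqref{eq:posexponent} and \eqref{eq:negexponent}. For sharpness, equality in the radial Hölder forces the pointwise Euler--Lagrange relation $|g_\om'|^p t^{n-1-\al p}=C^p|g_\om|^s t^{n-1-\be}$, i.e. $|g_\om'|=C|g_\om|^{\frac{r-1}{p-1}}t^{(1+\al-\be/p)-1}$; solving this ODE in $t$ yields, according to the sign of $\mu:=1+\al-\tfrac\be p$ and of $p-r$, precisely the profiles \eqref{eq:extremali}--\eqref{eq:extremalv}, with the integration constant promoted to an arbitrary angular function $\lam(\om)$. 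The pleasant point — and the reason the extremals may depend freely on $\om$ — is that the proportionality constant $C^p$ in this relation is \emph{independent of $\lam$}, so integrating it gives $G(\om)=C^p H(\om)$ for every $\om$; hence the spherical Hölder is automatically saturated as well, whatever the choice of $\lam(\om)$.

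The main obstacle I anticipate is not the inequality itself but the rigorous justification of sharpness: the candidate extremals are in general neither smooth nor compactly supported, so one must show they are attained in the limit by a sequence in $C_0^\infty(\R^n)$ while keeping all three weighted integrals finite. This is where the integrability hypotheses enter — the condition \eqref{eq:condofXia} in case (i), the sign of $\mu$ in cases (ii)--(iv), and the stated finiteness of $\int_{S^{n-1}}\lam^{(\cdots)}\,d\om$ in each case are exactly what guarantee that the extremal profiles lie in the relevant weighted spaces and can be truncated and mollified without loss in the ratio. A secondary technical point is the legitimacy of the integration by parts and of differentiating $|g_\om|^r$ when $r\le 1$, which is handled by the regularization or support restriction indicated above.
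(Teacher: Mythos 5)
Your proposal is correct, and its computational core --- polar coordinates, integration by parts against $t^{n-\ga r}$ (legitimate since $n-\ga r>0$ by \eqref{eq:integrabilitycond}), then H\"older with the weight splitting closed exactly by the balance condition \eqref{eq:balancecond}, and sharpness read off from the explicit extremals --- is the same as the paper's. The genuine structural difference is where H\"older is applied. The paper (Theorem \ref{Mainequality}) integrates by parts globally in polar coordinates and applies a \emph{single} H\"older inequality over all of $\sf$, in the refined equality form \eqref{eq:refineHolder} with remainder $R_p$; sharpness is then just the pointwise verification $R_p\equiv 0$ (and $J'\equiv 0$) for the stated extremals. You instead prove the stronger fiberwise inequality on each ray and then apply a second H\"older inequality on $S^{n-1}$. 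This costs an extra equality analysis: per-ray saturation only forces proportionality with a ray-dependent constant $C(\om)$, so the spherical H\"older must be saturated separately --- and you correctly supply the key observation that the Euler--Lagrange constant is independent of $\lam(\om)$, whence $G(\om)=C^pH(\om)$ identically. In the paper this point is subsumed in the single global condition $R_p=0$. What each route buys: yours is more elementary and makes transparent \emph{why} the extremals admit an arbitrary angular profile $\lam(\om)$; the paper's global identity, which keeps the density $J(u,t)$ explicit, generalizes verbatim to Cartan--Hadamard manifolds and yields the quantitative remainder terms of Corollary \ref{QuantCKN}, which your reduction does not directly produce. Two minor remarks: both you and the paper are equally informal about the extremals being neither smooth nor compactly supported (you at least flag the needed truncation/mollification; the paper performs such an approximation only later, in the proof of Theorem \ref{CKNonCH}); and your ODE derivation, carried out in case (iii), gives the exponent $\frac{p-1}{p-r}$ rather than the $\frac{p-r}{p-1}$ printed in \eqref{eq:extremaliii} --- consistent with the solution displayed in the paper's proof of Theorem \ref{Maintheorem3}, so your method detects what is evidently a typo in the statement.
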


%Looking for the sharp constant in  CKN inequalities is an interesting and non-trivial problems. Another question is whether the optimal functions for CKN inequalities exist or not. Several results are well-known in this direction. For examples, the sharp constants in the Sobolev inequalities was found independently by \cite{Aubin,Talenti}, the sharp Hardy--Sobolev inequalities was proved by Lieb \cite{Lieb}, the sharp constants in the Gagliardo--Nirenberg inequalities was established by Del Pino and Dolbeault \cite{DD1,DD2} (see also \cite{CNV} for different proof by using mass transportation technique), etc. 

%Let us start by recalling the following Caffarelli--Kohn--Nirenberg (shortly, CKN) type inequalities established in \cite{Xia,Nguyen2017a,Nguyen2017}: Suppose that $p>1$, $r>0$ and $\al, \beta, \ga \in \R$ such that

Obviously, the inequality $|\pa_r f|\leq |\na f|$ holds. Consequently, the inequality \eqref{eq:posexponent} is stronger than the one of Xia \eqref{eq:CKNofXia}. In general, we have the following consequences.

\begin{corollary}\label{myCKNfull}
Suppose $n, p,q,r,\al, \be,\ga$ as in the statement of Theorem \ref{myCKN}. Then the following inequalities hold true for any $f\in C_0^\infty(\R^n\setminus\{0\})$:
\item (a) If $r>1$, then we have
\begin{equation}\label{eq:posexponentfull}
\int_{\R^n} \frac{|f|^r}{|x|^{\ga r}} dx \leq \frac{r}{(n-\ga r)} \lt(\int_{\R^n} \frac{|\na f|^p}{|x|^{\al p}} dx\rt)^{\frac1p} \lt(\int_{\R^n} \frac{|f|^{\frac{p(r-1)}{p-1}}}{|x|^{\beta}} dx\rt)^{\frac{p-1}p}.
\end{equation}
\item (b) If $r\in (0,1)$, then we have
\begin{equation}\label{eq:negexponentfull}
\int_{\R^n} \frac{|f|^r}{|x|^{\ga r}} dx \leq \frac{r}{(n-\ga r)} \lt(\int_{\R^n} \frac{|\na f|^p}{|x|^{\al p}} dx\rt)^{\frac1p} \lt(\int_{\text{supp}f} \frac{|f|^{\frac{p(r-1)}{p-1}}}{|x|^{\beta}} dx\rt)^{\frac{p-1}p}.
\end{equation}
Moreover, the constant $\frac{r}{n-\ga r}$ is sharp if one the conditions (i)--(v) in Theorem \ref{myCKN} holds true, and all extremal are given by the corresponding family of extremal in Theorem \ref{myCKN} with $\lam$ being identically constant.
\end{corollary}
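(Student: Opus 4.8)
The plan is to derive the whole corollary from Theorem \ref{myCKN} by exploiting the elementary pointwise bound $|\pa_r f|\le|\na f|$, so that no new analysis is required. First I would record the inclusion $C_0^\infty(\R^n\setminus\{0\})\subset C_0^\infty(\R^n)$, which lets me apply Theorem \ref{myCKN} to every admissible $f$ in the corollary. For such $f$ one then has the chain
\[
\int_{\R^n}\frac{|f|^r}{|x|^{\ga r}}\,dx \;\le\; \frac{r}{n-\ga r}\left(\int_{\R^n}\frac{|\pa_r f|^p}{|x|^{\al p}}\,dx\right)^{\frac1p}\left(\int_{\R^n}\frac{|f|^{\frac{p(r-1)}{p-1}}}{|x|^{\beta}}\,dx\right)^{\frac{p-1}p} \;\le\; \frac{r}{n-\ga r}\left(\int_{\R^n}\frac{|\na f|^p}{|x|^{\al p}}\,dx\right)^{\frac1p}\left(\int_{\R^n}\frac{|f|^{\frac{p(r-1)}{p-1}}}{|x|^{\beta}}\,dx\right)^{\frac{p-1}p},
\]
where the first inequality is \eqref{eq:posexponent} and the second is $|\pa_r f|^p\le|\na f|^p$ integrated against the weight $|x|^{-\al p}$. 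This immediately gives \eqref{eq:posexponentfull}; for $r\in(0,1)$ the same argument, starting from \eqref{eq:negexponent} and keeping the domain $\sf$ in the last factor, yields \eqref{eq:negexponentfull}. Puncturing the origin is convenient here because it makes the orthogonal decomposition $|\na f|^2=|\pa_r f|^2+|x|^{-2}|\na_\om f|^2$ (with $\na_\om$ the gradient on $S^{n-1}$) valid on all of $\,\sf$.

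For sharpness I would simply test the corollary's inequality on the extremals of Theorem \ref{myCKN} taken with $\lam(\om)$ constant, i.e. on radial functions $f=g(|x|)$. For any radial $f$ one has $\pa_r f=g'(|x|)$ and $\na f=g'(|x|)\,\tfrac{x}{|x|}$, hence $|\na f|=|\pa_r f|$ and the second inequality in the chain above is an equality. Since the chosen radial functions already saturate the first inequality by Theorem \ref{myCKN} (cases (i)--(v), with extremals \eqref{eq:extremali}--\eqref{eq:extremalv} specialized to constant $\lam$), they saturate the full chain, so $\frac{r}{n-\ga r}$ is the best constant in \eqref{eq:posexponentfull} and \eqref{eq:negexponentfull} as well.

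The only genuine work is the converse, namely that these radial profiles are the \emph{only} extremals. The plan is to observe that if $f$ realizes equality in the corollary, then every inequality in the chain must be an equality. Equality in the second step forces $\int_{\sf}\big(|\na f|^p-|\pa_r f|^p\big)|x|^{-\al p}\,dx=0$, whence $|\na f|=|\pa_r f|$ a.e.\ and therefore $\na_\om f\equiv0$, so $f$ is radial. Equality in the first step forces $f$ to lie in the extremal family of Theorem \ref{myCKN} corresponding to the relevant case among (i)--(v); but such a function is radial precisely when its profile $\lam(\om)$ is constant. Combining the two conclusions gives exactly the stated characterization. The main obstacle is not conceptual but bookkeeping: I must invoke the equality-rigidity of Theorem \ref{myCKN} (that its listed families are the complete set of extremals), make sure the two equality conditions are read off in the correct natural weighted function space in which the extremals live rather than in $C_0^\infty$, and verify that the argument goes through uniformly across all five regimes, including the $0<r<1$ case where the last integral is taken over $\,\sf$.
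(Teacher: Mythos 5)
Your proposal is correct and follows essentially the same route as the paper, which presents the corollary as an immediate consequence of Theorem \ref{myCKN} combined with the pointwise bound $|\pa_r f|\le|\na f|$, with sharpness checked on the radial members ($\lam\equiv\text{const}$) of the extremal families. Your characterization of all extremals (equality forces $|\na f|=|\pa_r f|$, hence radiality, hence constant $\lam$, with the rigidity of the radial-derivative inequality supplied by the remainder identity behind Theorem \ref{myCKN}) is exactly the mechanism the paper itself deploys when proving Theorems \ref{Maintheorem1} and \ref{Maintheorem3}.
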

%The constant $r/|n-\ga r|$ is the best constant in \eqref{eq:posexponent} and \eqref{eq:negexponent} (see \cite{Xia,Nguyen2017,Nguyen2017a}). 

%The inequality \eqref{eq:posexponent} was proved by Xia \cite{Xia} in the case $r > p > 1$ under extra conditions that
%\begin{equation}\label{eq:integrabilitycond}
%\frac1r -\frac\ga n >0,\qquad \frac1p -\frac\al n >0,\qquad 1-\frac\beta n >0.
%\end{equation}
%Under these assumptions, the inequality \eqref{eq:posexponent} is a subcalss of the celebrated CKN inequalities established by Caffarelli, Kohn and Nirenberg \cite{CKN}. 

The novelty in Theorem \ref{myCKN} and Corollary \ref{myCKNfull} is that the inequalities are established for radial derivation $\pa_r$, a family of extremal is found out, and especially it extends the inequality of Xia \eqref{eq:CKNofXia} to the case $0< r <p$. It is remarkable that if $1 < r < (2p-1)/p$ or $r \in (0,1)$ then $q: =p(r-1)/(p-1) \in (0,1)$ or $q < 0$ respectively. Hence, Theorem \ref{myCKN} and Corollary \ref{myCKNfull} also provide the new type of CKN inequalities in comparing with the one of Caffarelli, Kohn and Nirenberg \eqref{eq:CKN}. Also by $q < 0$ if $r\in (0,1)$, we then need a slight modification in inequalities \eqref{eq:negexponent} and \eqref{eq:negexponentfull} by taking the second integral in their right hand side on $\sf$ to ensure these inequalities being sense. Note that the celebrated sharp Heisenberg--Pauli--Weyl uncertainty principle (see \cite{Heisenberg,Weyl}) and its extremals (up to a constant) by the family of Gaussian functions is a special case Corollary \ref{myCKNfull} above corresponding to the case $p=r =2$, $\al =0$ and $\beta =-2$.

The next purpose of this paper is to describe a complete scenario concerning to the CKN inequalities \eqref{eq:posexponent} and \eqref{eq:negexponent} on complete Riemannian manifolds. Our next results tell us that the inequalities \eqref{eq:posexponent} and \eqref{eq:negexponent} still hold on Cartan--Hadamard manifolds $(M,g)$ (i.e., $n-$dimensional complete simply connected Riemannian manifolds with non-positive sectional curvature). For a Riemannian manifold $(M,g)$ with Riemannian metric $g$, we denote by $\na_g f$ the gradient of function $f$ with respect to metric $g$ and by $|\na_g f| = \sqrt{\la \na_g f,\na_g f\ra}$ the length of $\na_g f$, and by $d_P(x) = d(x,P), x\in M$ for a fixed point $P\in M$, where $d$ is geodesic distance on $M$. We also use $\pa_\rho f$ to denote the radial derivation of function $f$ on $M$ (the derivation along the geodesic curve starting from a fixed point $P\in M$). We then have the following results.

\begin{theorem}\label{CKNonCH}
Let $n, p, q, r, \al, \beta$ and $\ga$ be as in statement of Theorem \ref{myCKN}. Let $(M,g)$ be an $n-$dimensional Cartan--Hadamard manifold and $P \in M$ be a fixed point, and $\pa_\rho$ denote the radial derivation along geodesic curve starting from $P$. Then the following inequalities hold true for any function $f\in C_0^\infty(M)$:
\item (a) If $r>1$, then we have
\begin{equation}\label{eq:posexponentCH}
\int_{\R^n} \frac{|f|^r}{d_P(x)^{\ga r}} dx \leq \frac{r}{(n-\ga r)} \lt(\int_{\R^n} \frac{|\pa_\rho f|^p}{d_P(x)^{\al p}} dx\rt)^{\frac1p} \lt(\int_{\R^n} \frac{|f|^{\frac{p(r-1)}{p-1}}}{d_P(x)^{\beta}} dx\rt)^{\frac{p-1}p}.
\end{equation}
\item (b) If $r\in (0,1)$, then we have
\begin{equation}\label{eq:negexponentCH}
\int_{\R^n} \frac{|f|^r}{d_P(x)^{\ga r}} dx \leq \frac{r}{(n-\ga r)} \lt(\int_{\R^n} \frac{|\pa_\rho f|^p}{d_P(x)^{\al p}} dx\rt)^{\frac1p} \lt(\int_{\text{supp}f} \frac{|f|^{\frac{p(r-1)}{p-1}}}{d_P(x)^{\beta}} dx\rt)^{\frac{p-1}p}.
\end{equation}
Moreover, the constant $\frac{r}{n-\ga r}$ is sharp if one of the conditions (i)--(v) in Theorem \ref{myCKN} holds.
\end{theorem}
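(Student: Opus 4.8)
The plan is to reduce everything to a weighted one-dimensional inequality along the geodesic rays issuing from $P$, exactly as in the proof of Theorem \ref{myCKN}, but now carrying the Riemannian volume density in place of the Euclidean factor $\rho^{n-1}$ and absorbing the discrepancy through a curvature comparison. Since $(M,g)$ is Cartan--Hadamard, the Cartan--Hadamard theorem guarantees that $\exp_P\colon T_PM\to M$ is a diffeomorphism; hence the geodesic polar coordinates $x=\exp_P(\rho\,\om)$, with $\rho=d_P(x)\in(0,\infty)$ and $\om\in S^{n-1}$, are globally defined on $M\setminus\{P\}$ and there is no cut locus to contend with. Writing $d\Vol_g=J(\rho,\om)\,d\rho\,d\om$, the radial derivative $\pa_\rho f$ becomes $\frac{\pa}{\pa\rho}\big(f\circ\exp_P\big)$, and each of the three integrals in \eqref{eq:posexponentCH}--\eqref{eq:negexponentCH} splits as an integral over $S^{n-1}$ of a one-dimensional integral in $\rho$ weighted by $J$.

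The geometric input is the Laplacian (G\"unther) comparison: because the sectional curvature of $M$ is non-positive, the density satisfies $\frac{\pa_\rho J(\rho,\om)}{J(\rho,\om)}=\De_g d_P\ge\frac{n-1}{\rho}$, equivalently $\rho\,\pa_\rho J\ge(n-1)J$ for every $\om$. This single inequality is precisely what recovers the Euclidean constant. Fixing $\om$, setting $h(\rho)=f(\exp_P(\rho\om))$ and introducing $\Phi(\rho)=\rho^{1-\ga r}J(\rho,\om)$, a direct differentiation together with the comparison gives
\begin{equation*}
\Phi'(\rho)=\rho^{-\ga r}J\lt[(1-\ga r)+\rho\frac{\pa_\rho J}{J}\rt]\ge(n-\ga r)\,\rho^{-\ga r}J(\rho,\om),
\end{equation*}
where $n-\ga r>0$ by \eqref{eq:integrabilitycond}. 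Since $\Phi(0)=0$ (as $J(\rho,\om)\sim\rho^{n-1}$ near $0$) and $f$ has compact support, integrating by parts and using $|(|h|^r)'|\le r|h|^{r-1}|h'|$ yields
\begin{equation*}
(n-\ga r)\i0i |h|^r\rho^{-\ga r}J\,d\rho\le\i0i |h|^r\Phi'\,d\rho\le r\i0i |h|^{r-1}|h'|\,\rho^{1-\ga r}J\,d\rho .
\end{equation*}

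Next I would apply H\"older in $\rho$ with exponents $p$ and $p'=p/(p-1)$, splitting the weight as $\rho^{1-\ga r}J=(\rho^{-\al}J^{1/p})(\rho^{1-\ga r+\al}J^{1/p'})$; the balance condition \eqref{eq:balancecond}, in the form $\ga r=1+\al+\frac{p-1}{p}\be$, is exactly what makes the two resulting weights equal $\rho^{-\al p}J$ (paired with $|h'|^p$) and $\rho^{-\be}J$ (paired with $|h|^{q}$, $q=\frac{p(r-1)}{p-1}$). This produces the one-dimensional inequality with constant $\frac{r}{n-\ga r}$ for each $\om$; integrating over $S^{n-1}$ and applying H\"older once more on the sphere (again with exponents $p,p'$) reassembles the global integrals and gives \eqref{eq:posexponentCH}. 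The case $r\in(0,1)$ of \eqref{eq:negexponentCH} is identical except that $q$ may be negative, so the last integral is taken over $\sf$ and all $\rho$-integrals are restricted to the open set where $h\neq0$, with the boundary contributions still vanishing. Finally, for sharpness one observes that $\R^n$ is itself a Cartan--Hadamard manifold on which, by Theorem \ref{myCKN}(i)--(v), $\frac{r}{n-\ga r}$ is already optimal; on an arbitrary $M$ one recovers optimality by testing \eqref{eq:posexponentCH} against the Euclidean extremals of Theorem \ref{myCKN} rescaled to concentrate at $P$, where $J(\rho,\om)/\rho^{n-1}\to1$ as $\rho\to0$ forces the three ratios to converge to their Euclidean values.

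The main obstacle I anticipate is not the algebra but the careful justification of the comparison and of the integrations by parts: one must verify that $J(\rho,\om)$ is smooth and strictly positive for all $\rho>0$ (guaranteed by the absence of conjugate points on a Cartan--Hadamard manifold), that $\De_g d_P\ge(n-1)/d_P$ holds globally, and that the boundary terms at $\rho=0$ and $\rho=\infty$ genuinely vanish---especially in the regime $r\in(0,1)$, where negative powers of $|h|$ appear and integrability near the zero set of $f$ must be handled with care. Once these geometric and integrability points are secured, the inequality follows verbatim from the Euclidean computation with $\rho^{n-1}$ replaced by $J$.
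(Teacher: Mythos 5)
Your proposal is correct and is essentially the paper's own argument: global geodesic polar coordinates on a Cartan--Hadamard manifold, integration by parts in the radial variable, the non-positive-curvature comparison on the volume density (your $\rho\,\pa_\rho J\ge (n-1)J$ is exactly the paper's $J'\ge 0$ in its normalization where the factor $\rho^{n-1}$ is pulled out), a H\"older split governed by the balance condition \eqref{eq:balancecond}, and sharpness by transplanting rescaled, compactly supported (approximations of the) Euclidean extremals via $d_P$ and using $J/\rho^{n-1}\to 1$ near $P$. The only organizational difference is that the paper first establishes an exact identity (Theorem \ref{Mainequality}) with two non-negative remainders --- a refined-H\"older term and the $J'$ term --- from which the inequality and its quantitative versions follow, whereas you prove the inequality directly by replacing these remainders with the corresponding one-sided estimates.
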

Theorem \ref{CKNonCH} to gethether with Gauss lemma which says that $|\pa_\rho f| \leq |\na_\rho f|$ implies the following extension of Corollary \ref{myCKNfull} to the Cartan--Hadamard manifolds.

\begin{corollary}\label{CKNonCHfull}
Suppose the assumptions of Theorem \ref{CKNonCH}. Then the following inequalities hold true for any function $f\in C_0^\infty(M)$:
\item (a) If $r>1$, then we have
\begin{equation}\label{eq:posexponentCHfull}
\int_{\R^n} \frac{|f|^r}{d_P(x)^{\ga r}} dx \leq \frac{r}{(n-\ga r)} \lt(\int_{\R^n} \frac{|\na_g f|^p}{d_P(x)^{\al p}} dx\rt)^{\frac1p} \lt(\int_{\R^n} \frac{|f|^{\frac{p(r-1)}{p-1}}}{d_P(x)^{\beta}} dx\rt)^{\frac{p-1}p}.
\end{equation}
\item (b) If $r\in (0,1)$, then we have
\begin{equation}\label{eq:negexponentCHfull}
\int_{\R^n} \frac{|f|^r}{d_P(x)^{\ga r}} dx \leq \frac{r}{(n-\ga r)} \lt(\int_{\R^n} \frac{|\na_g f|^p}{d_P(x)^{\al p}} dx\rt)^{\frac1p} \lt(\int_{\text{supp}f} \frac{|f|^{\frac{p(r-1)}{p-1}}}{d_P(x)^{\beta}} dx\rt)^{\frac{p-1}p}.
\end{equation}
Moreover, the constant $\frac{r}{n-\ga r}$ is sharp if one of the conditions (i)--(v) in Theorem \ref{myCKN} holds.
\end{corollary}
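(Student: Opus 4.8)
The plan is to deduce both inequalities directly from Theorem~\ref{CKNonCH} by comparing the radial derivative with the full gradient, and then to recover sharpness by restricting attention to radial profiles. First I would record the pointwise bound furnished by the Gauss lemma: writing $\rho = d_P$, on a Cartan--Hadamard manifold there is no cut locus, so $\rho$ is smooth on $M\setminus\{P\}$ and $|\na_g \rho| = 1$ there. Since $\pa_\rho f = \la \na_g f, \na_g \rho\ra$, the Cauchy--Schwarz inequality gives $|\pa_\rho f| \le |\na_g f|$ pointwise almost everywhere. Consequently
\begin{equation*}
\int_M \frac{|\pa_\rho f|^p}{d_P(x)^{\al p}}\,dx \le \int_M \frac{|\na_g f|^p}{d_P(x)^{\al p}}\,dx
\end{equation*}
for every $f\in C_0^\infty(M)$. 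Substituting this bound into \eqref{eq:posexponentCH} when $r>1$ and into \eqref{eq:negexponentCH} when $r\in(0,1)$ — the left-hand side and the remaining factor being untouched — immediately yields \eqref{eq:posexponentCHfull} and \eqref{eq:negexponentCHfull}, with the same constant $\frac{r}{n-\ga r}$.

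For the sharpness claim under any of the conditions (i)--(v), I would argue that replacing $\pa_\rho$ by $\na_g$ costs nothing on radial functions. If $f(x) = h(d_P(x))$ depends only on the geodesic distance, then $\na_g f = h'(\rho)\,\na_g \rho$, whence $|\na_g f| = |h'(\rho)| = |\pa_\rho f|$; for such $f$ the right-hand sides of \eqref{eq:posexponentCHfull}--\eqref{eq:negexponentCHfull} and of \eqref{eq:posexponentCH}--\eqref{eq:negexponentCH} coincide verbatim. Hence the best constant in the gradient inequality, being the supremum of the defining ratio over all admissible $f$, is bounded below by the same supremum taken over the radial class, which equals the best constant for the radial-derivative inequality over radial functions. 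Since the optimality of $\frac{r}{n-\ga r}$ in Theorem~\ref{CKNonCH} is witnessed by radial near-extremal sequences — the Euclidean profiles \eqref{eq:extremali}--\eqref{eq:extremalv} with $\lam$ constant, transplanted through normal coordinates and concentrated near $P$ so that the metric is asymptotically Euclidean — this lower bound is exactly $\frac{r}{n-\ga r}$. Combined with the inequality already established, the constant is sharp.

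The argument is essentially a formality once Theorem~\ref{CKNonCH} is in hand, so I expect no serious analytic obstacle. The only point requiring genuine care is the sharpness step: one must ensure that a sequence witnessing optimality of $\frac{r}{n-\ga r}$ can be chosen radial, so that the identity $|\na_g f| = |\pa_\rho f|$ holds along it and no loss is incurred when the gradient replaces the radial derivative. This is precisely the case here because the relevant extremal profiles are radial; measure-theoretic subtleties are negligible, as $\na_g \rho$ is smooth on $M\setminus\{P\}$ for a Cartan--Hadamard manifold.
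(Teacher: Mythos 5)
Your proposal is correct and matches the paper's own route: the inequalities follow from Theorem \ref{CKNonCH} combined with the Gauss-lemma bound $|\pa_\rho f|\le|\na_g f|$, and sharpness is inherited because the near-extremal sequences in the paper's proof of Theorem \ref{CKNonCH} are radial transplants $\vphi(d_P(x)/\de)$ concentrated near $P$, on which $|\na_g f|=|\pa_\rho f|$. No gaps to report.
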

It is worthy to note that if the sectional curvature of $(M,g)$ is bounded from above by a strict negative constant then the CKN inequalities in Theorem \ref{CKNonCH} and Corollary \ref{CKNonCHfull} can be strengthened by adding a non-negative remainder term concerning to the upper bound of sectional curvature (see Section 3 below for more details).

In the sequel, we characterize the complete Riemannian manifolds which support the sharp CKN inequalities in Corollary \ref{myCKNfull} (i.e., inequalities \eqref{eq:posexponentfull} and \eqref{eq:negexponentfull}. Hereafter, in order to avoid confusions, the sharpness is understood in the sense that the CKN inequalities (of type \eqref{eq:posexponentfull} or \eqref{eq:negexponentfull}) hold on a Riemannian manifold $(M,g)$ with the same best constant as in the Euclidean space. From now on, we always assumptions that $n\geq 2$, $p>1$, $r>0$ and $\al, \be, \ga$ satisfy the conditions \eqref{eq:integrabilitycond} and \eqref{eq:balancecond}. Note that both \eqref{eq:posexponent} and \eqref{eq:negexponent} can be written in the form
\begin{equation*}\label{eq:formchung}
\int_{\R^n} \frac{|f|^r}{|x|^{\ga r}} dx \leq \frac{r}{(n-\ga r)} \lt(\int_{\R^n} \frac{|\na f|^p}{|x|^{\al p}} dx\rt)^{\frac1p} \lt(\int_{\text{\rm supp}f} \frac{|f|^{\frac{p(r-1)}{p-1}}}{|x|^{\beta}} dx\rt)^{\frac{p-1}p}.
\end{equation*}
Let $(M,g)$ be a $n-$dimensional  complete Riemannian manifold, $dV_g$ denote its canonical volume element, and $d_P(x) = d(x,P)$ be the geodesic distance from a point $x\in M$ to a point $P\in M$. For $P\in M$ fixed, we consider the CKN inequalities on $(M,g)$ at $P$ (shortly, $\CKNP$) of the form: for all $f \in C_0^\infty(M)$
\begin{equation*}
\int_{M} \frac{|f|^r}{d_P(x)^{\ga r}} dV_g \leq \frac{r}{(n-\ga r)} \lt(\int_{M} \frac{|\na_g f|^p}{d_P(x)^{\al p}} dV_g\rt)^{\frac1p} \lt(\int_{\sf} \frac{|f|^{\frac{p(r-1)}{p-1}}}{d_P(x)^{\beta}} dV_g\rt)^{\frac{p-1}p}. \tag{$\CKNP$}
\end{equation*}
Corollary \ref{CKNonCHfull} says that $\CKNP$ holds true on $n-$dimensional Cartan--Hadamard manifolds $(M,g)$. Our next result characterizes the attainability of the sharp constant in $\CKNP$ as follows.
\begin{theorem}\label{Maintheorem1}
Given $n\geq 2$, $r\geq p >1$ and $\al, \be, \ga \in \R$ satisfying the conditions \eqref{eq:integrabilitycond} and \eqref{eq:balancecond}. Suppose an extra assumption that $n-\beta + (1+ \al -\frac{\beta}p) \frac{p(r-1)}{p-r}< 0$ if $1< p< r$, or $1+ \al -\frac\beta p >0$ if $r=p$. Let $(M,g)$ be an $n-$dimensional Cartan--Hadamard manifold. Then the following statements are equivalent:
\begin{description}
\item (a) $\frac{r}{n-\ga r}$ is achived by an extremal which is not identically zero in $\CKNP$ for some $P \in M$.
\item (b) $\frac{r}{n-\ga r}$ is achived by an extremal which is not identically zero in $\CKNP$ for all $P \in M$.
\item (c) $M$ is isometric to $\R^n$.
\end{description}
\end{theorem}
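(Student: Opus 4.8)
The plan is to prove the cycle $(c)\Rightarrow(b)\Rightarrow(a)\Rightarrow(c)$. The implication $(b)\Rightarrow(a)$ is trivial, and $(c)\Rightarrow(b)$ is immediate: if $M$ is isometric to $\R^n$, then transporting the explicit radial extremals of Corollary \ref{myCKNfull} (case (i) when $1<p<r$, case (v) when $r=p$, with $\lam$ constant) and centering them at the point of $\R^n$ corresponding to $P$ produces a nonzero extremal for $\CKNP$, for every $P\in M$. Hence all the content lies in $(a)\Rightarrow(c)$, a rigidity statement I would extract from the equality discussion in the proof of Corollary \ref{CKNonCHfull}.

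To set up that analysis I would first recall how $\CKNP$ is proved. Writing $\rho=d_P$, the radial inequality \eqref{eq:posexponentCH} comes from integrating the divergence identity for the field $X=\rho^{1-\ga r}|f|^r\na_g\rho$,
\[
\d_g X=(1-\ga r)\rho^{-\ga r}|f|^r+r\,\rho^{1-\ga r}|f|^{r-1}\mathrm{sign}(f)\,\pa_\rho f+\rho^{1-\ga r}|f|^r\,\De_g\rho ,
\]
then bounding the last term below by the Laplacian comparison $\De_g\rho\ge (n-1)/\rho$, bounding $-\mathrm{sign}(f)\pa_\rho f\le|\pa_\rho f|$, and applying Hölder with exponents $p$ and $p/(p-1)$ together with the balance \eqref{eq:balancecond}; the flux of $X$ across small geodesic spheres about $P$ vanishes because $\ga r<n$ by \eqref{eq:integrabilitycond}. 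The full-gradient inequality $\CKNP$ is then obtained from \eqref{eq:posexponentCH} by the Gauss-lemma bound $|\pa_\rho f|\le|\na_g f|$. Thus an extremal for $\CKNP$ must realize equality in all four of these steps simultaneously.

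Now let $f\not\equiv0$ be an extremal for $\CKNP$; replacing $f$ by $|f|$ (which changes nothing, since $|\pa_\rho|f||=|\pa_\rho f|$ and $|\na_g|f||=|\na_g f|$ a.e.) I may take $f\ge0$. Equality in the Gauss-lemma step forces $\na_g f$ to be everywhere radial, so $f$ is a function of $\rho$ alone. Equality in the comparison step gives $\int_M\rho^{1-\ga r}f^r(\De_g\rho-\tfrac{n-1}{\rho})\,dV_g=0$; as the integrand is nonnegative, this forces $\De_g\rho=(n-1)/\rho$ on $\{f>0\}$. Equality in the sign step forces $\pa_\rho f\le0$, and equality in Hölder forces the pointwise proportionality which, after \eqref{eq:balancecond}, becomes the separable ODE $-\pa_\rho f=c^{1/p}\rho^{\al-\be/p}f^{(r-1)/(p-1)}$ along each ray from $P$.

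Integrating this ODE gives $f\propto(\lam+\rho^{1+\al-\be/p})^{(p-1)/(p-r)}$ when $1<p<r$ and $f\propto e^{-c\rho^{1+\al-\be/p}}$ when $r=p$. The extra hypotheses — precisely \eqref{eq:condofXia} for $1<p<r$ and $1+\al-\tfrac\be p>0$ for $r=p$ — are exactly what make these profiles integrable against the weights and strictly positive for all $\rho>0$; since $f$ is radial this yields $\{f>0\}=M\setminus\{P\}$, so $\De_g\rho=(n-1)/\rho$ holds on all of $M\setminus\{P\}$. It remains to upgrade this scalar identity to the full Hessian: on a Cartan--Hadamard manifold every eigenvalue of $\hess\,\rho$ on $(\na_g\rho)^\perp$ is $\ge1/\rho$, so trace equality forces $\hess\,\rho=\tfrac1\rho(g-d\rho\otimes d\rho)$ throughout $M\setminus\{P\}$, whence the radial Riccati equation gives vanishing radial curvature, the Jacobi fields along geodesics from $P$ are linear, and $\exp_P$ becomes a global isometry, i.e. $M\cong\R^n$. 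I expect the main obstacle to be exactly this geometric upgrade (trace $\to$ Hessian $\to$ Riccati/Jacobi rigidity), together with the analytic point of justifying the divergence identity for an extremal of full, noncompact support via a cutoff approximation that controls the flux both near $P$ (using $\ga r<n$) and at infinity (using the decay guaranteed by the extra hypotheses).
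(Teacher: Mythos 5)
Your proposal is correct, and its core strategy coincides with the paper's: symmetrize to a nonnegative extremal, use equality between the radial inequality \eqref{eq:posexponentCH} and $\CKNP$ plus the Gauss lemma to force radiality, use equality in H\"older (and in the sign of $\pa_\rho f$) to obtain the ODE $\pa_\rho f=-c\,d_P^{\al-\be/p}f^{(r-1)/(p-1)}$, solve it explicitly --- the extra hypotheses guaranteeing that the profile is strictly positive on all of $M$ --- and conclude rigidity from the vanishing of the curvature remainder on $\sf=M$. Two implementation differences are worth recording. First, your divergence identity for $X=\rho^{1-\ga r}|f|^r\na_g\rho$ is the paper's Theorem \ref{Mainequality} in disguise: the paper integrates by parts in polar coordinates \eqref{eq:polar}, and by the expansion \eqref{eq:Lapofdist} of $\De_g d_P$ through the volume density $J$, its remainder $\frac1{n-\ga r}\int_{\sf}\frac{|f|^r}{d_P^{\ga r}}\,\frac{d_P\,J'}{J}\,dV_g$ is identical to your term involving $\De_g\rho-\frac{n-1}{\rho}$, so the two decompositions agree. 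Second, the endgame genuinely differs: you upgrade the trace identity $\De_g\rho=(n-1)/\rho$ to $\hess\,\rho=\frac1\rho\lt(g-d\rho\otimes d\rho\rt)$ via the equality case of the Hessian comparison on Cartan--Hadamard manifolds, then run the Riccati/Jacobi-field argument to make $\exp_P$ a global isometry; the paper instead reads the same identity as $J'(u,t)=0$, deduces $J\equiv1$ from $J(u,0)=1$ and continuity, and concludes via the equality case of the volume comparison \eqref{eq:Bishop} (Chavel). Both finishes are standard and correct; yours is self-contained at the level of Jacobi fields, while the paper's is shorter given the density-function machinery it has already set up. Finally, the analytic point you flag --- that the extremal has full, noncompact support while $\CKNP$ and \eqref{eq:mainequality} are stated for $C_0^\infty(M)$, so a cutoff/density argument is needed to apply the equality case to it --- is real, but the paper glosses over it in exactly the same way (it applies \eqref{eq:mainequality} directly to the extremal), so your remark is, if anything, more careful than the source.
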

The case $p=2$ in Theorem \ref{Maintheorem1} was proved by Krist\'aly (see, e.g., \cite[Theorems $1.1$ and $1.3$]{K2}) under an assumption that the extremal is positive in $M$. This extra assumption is removed in our theorem. Theorem \ref{Maintheorem1} gives a non-positively curved counterpart of the rigidity result of Xia (see \cite[Theorem $1.3$]{Xia}) which asserts that if a complete Riemannian manifolds $(M,g)$ with non-negative Ricci curvature supporting the CKN inequality \eqref{eq:posexponent} with $r>p>1$ and $\al, \beta, \ga$ satisfying the conditions  \eqref{eq:integrabilitycond}, \eqref{eq:balancecond} and $n-\beta + (1+ \al -\frac{\beta}p) \frac{p(r-1)}{p-r}< 0$ must be isometric to $\R^n$. We refer readers to \cite{Ledoux,Bakry,Druet,CarXia,Xiasobolev,Xia,XiaGN,K1,K2,KO,LiWang,Minerbe,Hebey,Cheeger}) for another results in this subject. We next prove such a rigid result in the case $p=r$. It contains a recent rigidity result of Krist\'aly \cite[Theorem $1.2$]{K2} for the sharp Heisenberg--Pauli--Weyl principle (i.e., the case $p=r=2$, $\al =0$ and $\beta =-2$ of \eqref{eq:posexponentfull}) as a special case.

\begin{theorem}\label{Maintheorem2}
Given $n\geq 2$, $p>1$ and $\al, \beta, \ga \in \R$ such that \eqref{eq:integrabilitycond} and \eqref{eq:balancecond} hold true with $r =p$. Suppose, in addition, that $1 + \al -\beta/p>0$. Let $(M,g)$ be a $n-$dimensional complete Riemannian manifold with non-negative Ricci curvature. Then the following statements are equivalent:
\begin{description}
\item (a) $\CKNP$ holds for some $P \in M$.
\item (b) $\CKNP$ holds for all $P \in M$.
\item (c) $M$ is isometric to $\R^n$.
\end{description}
\end{theorem}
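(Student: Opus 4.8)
The plan is to establish the cycle (c) $\Rightarrow$ (b) $\Rightarrow$ (a) $\Rightarrow$ (c). The implication (c) $\Rightarrow$ (b) is immediate: once $M$ is identified isometrically with $\R^n$, for every $P$ the function $d_P$ becomes $|\cdot - P|$, and by translation invariance $\CKNP$ is exactly the Euclidean inequality \eqref{eq:posexponentfull} with $r=p$, which holds by Corollary \ref{myCKNfull}; here the hypothesis $1+\al-\beta/p>0$ is precisely what guarantees that the relevant Euclidean extremal (the Gaussian of Theorem \ref{myCKN}(v)) exists. The step (b) $\Rightarrow$ (a) is trivial, so the entire content lies in (a) $\Rightarrow$ (c).

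To prove (a) $\Rightarrow$ (c), I fix a point $P$ for which $\CKNP$ holds and test it against the radial extension of the Euclidean extremal, $u(x)=e^{-c\,d_P(x)^{s}}$ with $s:=1+\al-\beta/p>0$ and $c>0$. Because $\Ric\ge 0$, the Bishop--Gromov comparison bounds the volume density in geodesic polar coordinates about $P$ by its Euclidean value $\rho^{\,n-1}$; hence all three integrals in $\CKNP$ are finite for this (non-compactly supported) $u$, and a truncation/density argument transfers the assumed inequality from $C_0^\infty(M)$ to $u$. The heart of the matter is to produce the reverse inequality. Set $X:=\frac{1}{n-\ga r}\,d_P^{\,1-\ga r}\,\na_g d_P$ and compute
\[
\mathrm{div}_g X=\frac{1}{n-\ga r}\Bigl[(1-\ga r)\,d_P^{-\ga r}+d_P^{\,1-\ga r}\,\Delta_g d_P\Bigr]\le d_P^{-\ga r},
\]
where the inequality uses the Laplacian comparison $\Delta_g d_P\le (n-1)/d_P$ (valid under $\Ric\ge0$) together with $n-\ga r>0$, which follows from \eqref{eq:integrabilitycond}.

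Multiplying by $u^{r}\ge 0$, integrating over $M$, and integrating by parts, the factor $\frac{r}{n-\ga r}$ emerges and one obtains
\[
\int_M \frac{u^{r}}{d_P^{\ga r}}\,dV_g\ \ge\ \frac{r}{n-\ga r}\int_M u^{r-1}\,|\pa_\rho u|\,d_P^{\,1-\ga r}\,dV_g .
\]
By the balance condition \eqref{eq:balancecond} the weight $d_P^{\,1-\ga r}$ distributes so that H\"older's inequality with exponents $p$ and $p/(p-1)$ reproduces exactly the right-hand side of $\CKNP$; furthermore the radial profile $\phi(\rho)=e^{-c\rho^{s}}$ obeys the pointwise extremality relation $|\phi'|^{p}\rho^{-\al p}\propto \phi^{\,p}\rho^{-\beta}$ --- this is precisely where $s=1+\al-\beta/p$ enters --- so H\"older is an equality for our $u$. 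Consequently the displayed reverse inequality, combined with the assumed $\CKNP$ (and with $|\na_g u|=|\pa_\rho u|$ for radial $u$), forces equality throughout.

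It remains to read off rigidity from the equality. Equality in the divergence step forces $\Delta_g d_P=(n-1)/d_P$ at every point where $u\ne0$, that is, everywhere on $M$, since the Gaussian profile is strictly positive (this is exactly why the case $r=p$ is singled out: its extremal never vanishes, yielding a \emph{global} conclusion). This is the equality case of the Laplacian/Bishop--Gromov comparison, so the volume density equals $\rho^{\,n-1}$ identically, the exponential map $\exp_P$ is a global diffeomorphism, and the rigidity part of the volume comparison theorem gives that $M$ is isometric to $\R^n$. I expect the main obstacle to be precisely this last analytic passage: justifying the integration by parts for the non-compactly supported $u$ across a possibly non-empty cut locus, and extracting from equality both the pointwise identity $\Delta_g d_P=(n-1)/d_P$ and the emptiness of the cut locus. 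The plan is to handle it through Calabi's distributional form of the Laplacian comparison --- in which the singular part of $\Delta_g d_P$ along the cut locus is a non-positive measure that only reinforces the reverse inequality, and whose vanishing is forced by equality --- together with a truncation argument controlled by the Gaussian decay and the Euclidean volume bound coming from $\Ric\ge0$.
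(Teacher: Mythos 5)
Your proposal is correct in outline, but it takes a genuinely different route from the paper. The paper proves (a) $\Rightarrow$ (c) by a Ledoux-type ODE comparison: it applies $\CKNP$ to the whole one-parameter family $u_\lam(x)=e^{-\lam d_P(x)^{1+\al-\be/p}}$ (for which, when $r=p$, the two H\"older factors on the right of $\CKNP$ collapse into a single integral), obtaining the differential inequality $-\lam F'(\lam)\ge \tfrac{n-\ga p}{1+\al-\be/p}\,F(\lam)$ for $F(\lam)=\int_M e^{-p\lam d_P^{1+\al-\be/p}}d_P^{-\ga p}\,dV_g$, while the Euclidean counterpart $T(\lam)$ satisfies the same relation with equality; monotonicity of $F/T$ plus the large-$\lam$ limit (which localizes near $P$, where the metric is almost Euclidean) gives $F\ge T$, Bishop's pointwise bound $J(u,t)\le 1$ under $\Ric\ge 0$ gives $F\le T$, and $F\equiv T$ forces $\rho(u)=\infty$ a.e.\ and $J\equiv 1$, whence Bishop--Gromov equality rigidity. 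Crucially, this route never integrates by parts on $M$, so the cut locus and the non-compact support of $u_\lam$ cause no difficulty beyond approximating $u_\lam$ by $C_0^\infty(M)$ functions. You instead fix a single extremal $u=e^{-c\,d_P^{s}}$ and run the divergence theorem against the distributional Laplacian comparison $\De_g d_P\le (n-1)/d_P$, closing the loop with H\"older equality for the Gaussian profile; this is in effect the extension of the paper's own integration-by-parts identity (Theorem \ref{Mainequality}) from Cartan--Hadamard manifolds to $\Ric\ge 0$, the term $J'/J\le 0$ there being equivalent to your comparison inequality since $\De_g d_P-\tfrac{n-1}{d_P}=J'/J$ inside the segment domain. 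Your equality analysis is sound, and the one step you should make fully explicit is the one you flag: equality must kill not only the absolutely continuous defect but also the cut-locus contribution, and this must be shown to empty the cut locus. Rather than invoking the structure theory of the singular measure (e.g.\ density in the cut locus of points with two minimizing geodesics), the cleanest finish is to do the integration by parts ray-by-ray in polar coordinates \eqref{eq:polar}: each ray with $\rho(u)<\infty$ contributes a nonnegative boundary term $\tfrac{1}{n-\ga r}\,\phi(\rho(u))^{r}\rho(u)^{n-\ga r}J(u,\rho(u)^-)$, where $\phi(t)=e^{-ct^{s}}$, and since $\phi>0$ and $J\equiv 1$ up to the cut distance, equality forces $\rho(u)=\infty$ for a.e.\ (hence, by continuity of $\rho$, all) $u$; this yields $\Vol_g(B(P,\rho))=\om_n\rho^n$ for all $\rho$, after which, as you correctly note, one still needs the rigidity case of Bishop--Gromov, because $J\equiv 1$ constrains only the volume element and not the metric. (One small inaccuracy: the hypothesis $1+\al-\be/p>0$ is not needed for (c) $\Rightarrow$ (b), since the Euclidean inequality of Corollary \ref{myCKNfull} holds regardless; it is needed exactly where you use it, namely so that your test profile decays.) In sum, the paper's approach buys technical economy — no Calabi comparison, no cut-locus measure theory — while yours buys a pointwise localization of the rigidity and a treatment unified with the paper's Cartan--Hadamard machinery.
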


We next consider the case $0< r < p$, $r \not =1$. As seen before (see Theorem \ref{myCKN} and Corollary \ref{myCKNfull}), the extremal of \eqref{eq:posexponent} and \eqref{eq:negexponent} in the Euclidean space with $0< r < p$, $r\not=1$ are compactly supported functions. This is very different with the case $r\geq p>1$ in which the extremal never vanish. Consequently, different with the result in Theorem \ref{Maintheorem1} in which a global result was proved, the attainability of the sharp constants in $\CKNP$ only characterizes locally the Riemannian manifold $(M,g)$ around the point $P\in M$ as stated in the following theorem.

\begin{theorem}\label{Maintheorem3}
Given $n\geq 2$, $p>1$, $0 < r < p, r\not=1$ and $\al, \beta,\ga \in \R$ satisfying the conditions \eqref{eq:integrabilitycond} and \eqref{eq:balancecond}. Suppose that one of the following extra assumptions holds: $1+ \al -\frac\beta p \geq 0$ or $1+ \al -\frac\beta p < 0$ and $n-\beta + (1+ \al -\frac{\beta}p) \frac{p(r-1)}{p-r}>0$. Let $(M,g)$ be an $n-$dimensional Cartan--Hadamard manifold, and $P\in M$. Then the following statements are equivalent:
\begin{description}
\item (a) $\frac{r}{n-\ga r}$ is achived by an extremal which is not identically zero in $\CKNP$.
\item (b) There exists $r_P >0$ such that the geodesic ball $B(P,r_P)$ is isometric to $B_{r_P}(0)$, here $B_r(0)$ denotes the ball in $\R^n$ with center at origin and radius $r$.
\end{description}
\end{theorem}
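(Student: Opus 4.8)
The plan is to establish the two implications $(b)\Rightarrow(a)$ and $(a)\Rightarrow(b)$ separately. The implication $(b)\Rightarrow(a)$ is the transplantation direction: assuming $\exp_P$ is an isometry from $B_{r_P}(0)\subset\R^n$ onto $B(P,r_P)$, I would take one of the compactly supported Euclidean extremals of Theorem~\ref{myCKN} in the relevant range $0<r<p$, $r\neq1$ (the profiles \eqref{eq:extremalii}, \eqref{eq:extremaliii} or \eqref{eq:extremaliv} with $\lam$ \emph{constant}) and tune the free parameter so that its support sits inside $B_{r_P}(0)$: for $1+\al-\tfrac\be p>0$ one shrinks the support by taking $\lam$ small, for $1+\al-\tfrac\be p=0$ by sending $\lam\to-\infty$, and for $1+\al-\tfrac\be p<0$ by taking $\lam$ large. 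Pushing this function forward by $\exp_P$ produces $f\in C_0^\infty(M)$ supported in $B(P,r_P)$; since the isometry preserves $d_P$, the pointwise length $|\na_g f|$ (equivalently $|\pa_\rho f|$) and the volume element $dV_g$, all three integrals in $\CKNP$ coincide with their Euclidean counterparts, so the Euclidean equality transfers verbatim and the sharp constant is attained.

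For the rigidity direction $(a)\Rightarrow(b)$, I would first isolate the three inequalities underlying the proof of Theorem~\ref{CKNonCH}. Writing $X=d_P^{1-\ga r}\na_g d_P$, the chain is: (I) the Laplacian comparison $\Delta_g d_P\ge\frac{n-1}{d_P}$ on a Cartan--Hadamard manifold, which gives $\mathrm{div}_g X\ge(n-\ga r)d_P^{-\ga r}$ and, after multiplication by $|u|^r\ge0$ and integration by parts, $(n-\ga r)\int_M d_P^{-\ga r}|u|^r\,dV_g\le -\int_M X\cdot\na_g(|u|^r)\,dV_g$; (II) the pointwise sign bound $-\mathrm{sign}(u)\,\pa_\rho u\le|\pa_\rho u|$; and (III) Hölder's inequality with exponents $p$ and $p/(p-1)$, where the balance condition \eqref{eq:balancecond} matches the weights to $d_P^{-\al p}$ and $d_P^{-\be}$. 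If $u$ is a nonzero extremal then each of (I)--(III) must be an equality. Equality in (I) forces $\big(\Delta_g d_P-\tfrac{n-1}{d_P}\big)d_P^{1-\ga r}|u|^r=0$ almost everywhere, hence
\begin{equation*}
\Delta_g d_P=\frac{n-1}{d_P}\qquad\text{on }\Omega:=\{u\neq0\},
\end{equation*}
while equality in (II)--(III) forces, along each geodesic ray from $P$, the separable ODE $|\pa_\rho u|^p=c\,d_P^{\al p-\be}\,u^{p(r-1)/(p-1)}$ with $u$ radially non-increasing, whose solutions are exactly the radial profiles of Theorem~\ref{myCKN}(ii)--(iv).

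The next step is to show that $\Omega$ contains a punctured geodesic ball $B(P,r_P)\setminus\{P\}$. Integrating the ODE in the regular cases $1+\al-\tfrac\be p\ge0$ gives $u(t,\om)=c\big(\lam(\om)-c't^{1+\al-\be/p}\big)_+^{(p-1)/(p-r)}$ (or the logarithmic analogue when $1+\al-\tfrac\be p=0$), which is \emph{finite} at $P$; continuity of $u$ at the single point $P$ forces the angular integration constant $\lam(\om)$ to be independent of $\om$, so $\Omega=B(P,r_P)$ is a genuine geodesic ball. In the singular case $1+\al-\tfrac\be p<0$ one has $u(t,\om)=c\big(t^{1+\al-\be/p}-\lam(\om)\big)_+^{(p-1)/(p-r)}$, which blows up at $P$; here the extra hypothesis $n-\be+(1+\al-\tfrac\be p)\frac{p(r-1)}{p-r}>0$ is exactly what makes the defining integrals of $\CKNP$ converge near $P$, and together with the regularity of the extremal it guarantees that $u$ is positive on a full punctured neighbourhood of $P$, so again $B(P,r_P)\setminus\{P\}\subseteq\Omega$ for some $r_P>0$. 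I expect this step---ruling out angular degeneration of the support and controlling the blow-up profile---to be the main obstacle of the proof.

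Finally, on $B(P,r_P)$ one upgrades the scalar equality $\Delta_g d_P=\frac{n-1}{d_P}$ to full flatness. Along unit-speed geodesics from $P$, let $S$ denote the shape operator of the geodesic spheres, so that $\Delta_g d_P=\tr S$ and $S$ solves the Riccati equation $S'+S^2+R_{\pa_\rho}=0$, where $R_{\pa_\rho}=R(\,\cdot\,,\pa_\rho)\pa_\rho$. Non-positivity of the sectional curvature gives the matrix inequality $S\ge\frac{1}{d_P}\mathrm{Id}$; since its trace now vanishes identically on $B(P,r_P)$, one concludes $S=\frac{1}{d_P}\mathrm{Id}$, and feeding this back into the Riccati equation yields $R_{\pa_\rho}=-S'-S^2=0$ throughout $B(P,r_P)$. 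Hence all radial sectional curvatures vanish, the Jacobi equation degenerates to $J''=0$ so that Jacobi fields are linear, the metric in geodesic polar coordinates coincides with the Euclidean one, and $\exp_P\colon B_{r_P}(0)\to B(P,r_P)$ is an isometry, which is precisely statement $(b)$.
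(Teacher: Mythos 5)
Your rigidity direction has a genuine gap in how it disposes of the angular dependence of the ODE constant. The continuity\nobreakdash-at\nobreakdash-$P$ argument works only in the strict case $1+\al-\frac\be p>0$, where the profile $c\bigl(\lam(\om)-c't^{1+\al-\be/p}\bigr)_+^{(p-1)/(p-r)}$ is finite at $P$. In \emph{both} remaining cases the profile blows up at $P$: for $1+\al-\frac\be p=0$ the solution is $c\bigl(\lam(\om)-c'\ln t\bigr)_+^{(p-1)/(p-r)}\to\infty$ as $t\to0^+$ (so the logarithmic case is not, as you assert, "finite at $P$"; your own shrinking device $\lam\to-\infty$ in the transplantation direction already shows it behaves like the singular case), and for $1+\al-\frac\be p<0$ it is $c\bigl(t^{1+\al-\be/p}-\lam(\om)\bigr)_+^{(p-1)/(p-r)}\to\infty$. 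In these two cases your substitute claim --- that integrability of the $\CKNP$ integrals near $P$ plus "regularity of the extremal" forces positivity on a full punctured neighbourhood of $P$ --- is not a proof. The equality conditions in your chain (I)--(III) determine $u$ only ray by ray, with positivity set $\{t<\lam(\om)^{1/(1+\al-\be/p)}\}$ (resp.\ $\{t<e^{\lam(\om)/c'}\}$) along the ray $\om$, and nothing in the chain excludes $\lam(\om)\to\infty$ (resp.\ $\lam(\om)\to-\infty$) along some directions, i.e.\ a support that pinches to $P$, or even rays that miss the support entirely. In that situation no punctured ball is contained in $\{u>0\}$, and your final Riccati/shape-operator step has no set on which to run.

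The missing ingredient is precisely the first step of the paper's proof, and it is invisible in your chain because (I)--(III) only use $\pa_\rho u$, i.e.\ they prove the \emph{radial-derivative} inequalities of Theorem \ref{CKNonCH}, which hold with the same constant $\frac{r}{n-\ga r}$. Hence, if $u$ is a nonzero extremal of the full-gradient inequality $\CKNP$, comparing the two gives $\int_M d_P^{-\al p}|\na_g u|^p\,dV_g\le\int_M d_P^{-\al p}|\pa_\rho u|^p\,dV_g$, and since $|\pa_\rho u|\le|\na_g u|$ pointwise (Gauss lemma), we get $|\pa_\rho u|=|\na_g u|$ a.e.; thus $\na_g u$ is parallel to $\na_g d_P$ and $u$ is radial. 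Then $\lam(\om)\equiv\lam$ is constant automatically in all three cases, $\{u>0\}$ is exactly a (punctured) geodesic ball $B(P,r_P)$, and your argument closes. With this step inserted your proof is correct, and it differs from the paper's only at the end: the paper translates the equality in the curvature comparison into $J(u,t)\equiv1$ for $t<r_P$ (notation of Section 2), deduces $\Vol_g(B(P,r_P))=\om_n r_P^n$, and invokes the equality case of the Bishop--Gunther comparison \cite[Theorem III.4.2]{Chavel}, whereas you reprove that rigidity directly via the Riccati equation for the shape operator and linear Jacobi fields --- a valid, self-contained alternative. Your transplantation direction (b)$\Rightarrow$(a), which the paper leaves implicit, is fine.
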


We next present a non-negative curved counterpart of Theorem \ref{Maintheorem3}, that is, an analogue of Theorem \ref{Maintheorem2} and Theorem $1.3$ of Xia \cite{Xia} in the case $r \geq p >1$. We will see that in the non-negatively curve case, the situation is even more rigid than in Theorem \ref{Maintheorem3}. 

\begin{theorem}\label{Maintheorem4}
Given $n\geq 2$, $p>1$, $0< r< p$, $r\not=1$ and $\al, \beta, \ga \in \R$ satisfying the conditions \eqref{eq:balancecond} and \eqref{eq:integrabilitycond}. Suppose that one of the following extra assumptions holds: $1 + \al -\frac\beta p \geq 0$ or $1+ \al -\frac\beta p < 0$ and $n-\beta + (1+ \al -\frac{\beta}p) \frac{p(r-1)}{p-r}>0$. Let $(M,g)$ be an $n-$dimensional complete Riemannian manifold with non-negative Ricci curvature. Then the following statements are equivalent:
\begin{description}
\item (a) $\CKNP$ holds for some $P \in M$.
\item (b) $\CKNP$ holds for all $P \in M$.
\item (c) $M$ is isometric to $\R^n$.
\end{description}
\end{theorem}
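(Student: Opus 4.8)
The proof is organized around the cycle (c) $\Rightarrow$ (b) $\Rightarrow$ (a) $\Rightarrow$ (c); the first two implications are immediate, so the entire content lies in (a) $\Rightarrow$ (c). Indeed, (c) $\Rightarrow$ (b) is Corollary \ref{myCKNfull}: if $M$ is isometric to $\R^n$ then $d_P$ becomes the Euclidean distance and $\CKNP$ is exactly \eqref{eq:posexponentfull}--\eqref{eq:negexponentfull}; and (b) $\Rightarrow$ (a) is trivial.

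For (a) $\Rightarrow$ (c) the plan is to exploit the fact that, in the non-negatively curved case, the sharp constant goes the \emph{wrong} way, so that $\CKNP$ can survive only in the rigid Euclidean situation. Working in geodesic polar coordinates around $P$ and writing $\ga r=1+\al+\tfrac{p-1}{p}\be$ from \eqref{eq:balancecond}, I start from the divergence identity
\[
\d_g\!\big(d_P^{\,1-\ga r}\,\na_g d_P\big)=(n-\ga r)\,d_P^{-\ga r}+d_P^{\,1-\ga r}\Big(\Delta_g d_P-\tfrac{n-1}{d_P}\Big),
\]
and invoke the Laplacian comparison theorem, which for $\Ric\ge 0$ gives $\Delta_g d_P\le \tfrac{n-1}{d_P}$ (understood distributionally, with non-positive singular part on the cut locus). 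Multiplying by $|f|^r$, integrating by parts, and using $n-\ga r>0$ from \eqref{eq:integrabilitycond}, I obtain for every non-negative, radially non-increasing $f\in C_0^\infty(M)$ the estimate $(n-\ga r)\int_M d_P^{-\ga r}|f|^r\,dV_g\ge r\int_M d_P^{\,1-\ga r}|f|^{r-1}|\pa_\rho f|\,dV_g$, i.e.\ the reverse of the mechanism that produces \eqref{eq:posexponentCHfull}--\eqref{eq:negexponentCHfull} on Cartan--Hadamard manifolds.

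The decisive step is to feed in the radial test function $f=h_\lam(d_P)$ built from the Euclidean extremal of Theorem \ref{myCKN}(ii)--(iv), which is compactly supported in a ball $B(P,R_\lam)$ precisely because $0<r<p$, with $h_\lam(\rho)=(\lam-\rho^{\,\si})_+^{(p-1)/(p-r)}$, $\si=1+\al-\tfrac\be p$, in the case $\si>0$ and the analogous profiles otherwise. The key algebraic fact is that \emph{equality in H\"older is a pointwise proportionality condition, insensitive to the volume density}: a direct computation using $m=\tfrac{p-1}{p-r}$ shows that the extremal profile satisfies $d_P^{\,\be-\al p}\,|h_\lam'|^{\,p}\,h_\lam^{-p(r-1)/(p-1)}\equiv(m\si)^p$ along every ray, so the H\"older step remains an \emph{equality} on $(M,g)$, while $|\na_g f|=|\pa_\rho f|$ for radial $f$. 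The estimate of the previous paragraph therefore upgrades to the reversed sharp inequality
\[
\int_M\frac{|f|^r}{d_P^{\,\ga r}}\,dV_g\ge\frac{r}{n-\ga r}\Big(\int_M\frac{|\na_g f|^p}{d_P^{\,\al p}}\,dV_g\Big)^{\!\frac1p}\Big(\int_{\sf}\frac{|f|^{\frac{p(r-1)}{p-1}}}{d_P^{\,\be}}\,dV_g\Big)^{\!\frac{p-1}{p}},
\]
whereas hypothesis (a) gives the opposite inequality. Hence equality holds throughout, which forces the error term to vanish, i.e.\ $\Delta_g d_P=\tfrac{n-1}{d_P}$ on $B(P,R_\lam)\setminus\{P\}$; letting $\lam\uparrow\infty$ this holds on all of $M\setminus\{P\}$, and the same equality forces the singular part on the cut locus to carry no mass. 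Equality in the comparison theorem for $\Ric\ge0$ then gives $\mathrm{Vol}(B(P,\rho))=\om_n\rho^n$ for all $\rho>0$, and the classical rigidity in the Bishop--Gromov theorem yields that $M$ is isometric to $\R^n$.

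The step I expect to be the main obstacle is the low regularity of $d_P$: the extremal $h_\lam(d_P)$ is merely Lipschitz (and in case (iv) even singular at $P$), so it is not literally admissible in $\CKNP$ and the divergence identity must be read across the cut locus. I would handle this by the standard approximation of $h_\lam(d_P)$ by genuine $C_0^\infty(M)$ functions, relying on \eqref{eq:integrabilitycond} and the extra hypotheses on $\si$ to guarantee that the three integrals are finite and stable under the approximation, and by working throughout with the distributional Laplacian comparison, whose non-positive singular part is precisely what makes the forced equality also annihilate the cut locus. With these technical points secured, the concluding rigidity is classical.
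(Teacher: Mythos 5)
Your route for (a)$\Rightarrow$(c) is genuinely different from the paper's. The paper applies $\CKNP$ to the same extremal profiles $u_\lam$, but then converts everything into the Ledoux-type ODE comparison: it shows $\lam F'(\lam)\ge \de F(\lam)$ for $F(\lam)=\int_M u_\lam^r d_P^{-\ga r}dV_g$, compares with the Euclidean quantity $T$ solving $\lam T'=\de T$, normalizes via $\lim_{\lam\to 0^+}F/T=1$, and obtains the reverse bound $F\le T$ from the volume-element comparison $J(u,t)\le 1$; the equality $F\equiv T$ then forces, directly in the polar-coordinate formula \eqref{eq:polar}, both $J\equiv 1$ and $\rho(u)=\infty$, after which Bishop--Gromov rigidity applies. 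Your mechanism is more local: a reversed CKN inequality for the extremal profile, obtained from the distributional Laplacian comparison $\De_g d_P\le (n-1)/d_P$ under $\Ric\ge 0$, combined with the (correct, and nicely isolated) observation that H\"older equality for the profile is a pointwise proportionality identity, hence insensitive to the volume density; confronting this with hypothesis (a) forces equality everywhere. Up to the flagged approximation issues (and a small slip: in case (iv), where $\si=1+\al-\be/p<0$, the supports exhaust $M$ as $\lam\downarrow 0$, not $\lam\uparrow\infty$), this part is sound.

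The genuine gap is your last step. The forced equalities give you exactly two facts: $(\De_g d_P)_{\mathrm{ac}}=(n-1)/d_P$ almost everywhere, and vanishing of the non-positive singular part of $\De_g d_P$ on the cut locus. You then assert that ``equality in the comparison theorem'' yields $\Vol_g(B(P,\rho))=\om_n\rho^n$. But by \eqref{eq:Lapofdist} the a.e.\ equality is equivalent only to $J(u,t)\equiv 1$ \emph{inside} the cut locus, and that alone is not sufficient: the flat cylinder $S^1\times\R^{n-1}$ has $\Ric\equiv 0$ and $J\equiv 1$, yet $\Vol_g(B(P,\rho))<\om_n\rho^n$ for large $\rho$ and is not isometric to $\R^n$. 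So the entire rigidity content of your argument is carried by the implication ``(zero singular part) $\Rightarrow$ (the cut locus is empty, i.e.\ $\rho(u)=\infty$)'', and this is not an off-the-shelf equality case of any comparison theorem. It is true, but it requires a real argument: either the structure theory of the cut locus (points admitting at least two minimizing geodesics are dense in $\mathrm{Cut}(P)$, and the singular measure of $\De_g d_P$ is strictly negative on that stratum), or a Gauss--Green/coarea integration of the distributional identity $\De_g d_P=(n-1)/d_P$ over annuli to get $\mathcal H^{n-1}(\pa B(P,\rho))=n\om_n\rho^{n-1}$ for a.e.\ $\rho$ and hence the volume identity. Until one of these is supplied, your proof is incomplete at precisely the point where cylinder-type manifolds must be excluded. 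This is also where the paper's approach shows its advantage: working in polar coordinates makes the cut radius $\rho(u)$ appear explicitly in $F(\lam)$, so $F\equiv T$ forces $\rho(u)=\infty$ with no distributional analysis on the cut locus at all.
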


The rest of this paper is organized as follows. In Section 2, we recall the notion and results from Riemannian geometry which are used throughout in our proofs. In Section 3, we first prove the sharp CKN inequalities in Theorem \ref{myCKN} in Euclidean space, and then extend them to Cartan--Hadamard manifolds (i.e., prove Theorem \ref{CKNonCH}). We also prove in this section the quantitative CKN inequalities by adding the nonnegative remainder terms concerning to the upper bound of the sectional curvature of Riemannian manifolds. In Section 4, we prove the rigidity results for Cartan--Hadamard manifolds whenever $\CKNP$ is attained, that is, we prove Theorem \ref{Maintheorem1} and \ref{Maintheorem3}. In Section 5, we prove Theorem \ref{Maintheorem2} and \ref{Maintheorem4} on the rigidity results for complete Riemannian manifolds with non-negative Ricci curvature which support the sharp CKN inequalities.

\section{Preliminaries}
In this section, we list some basic properties on Riemannian manifolds, especially the properties of the Cartan--Hadamard manifolds and complete Riemannian manifolds with non-negative Ricci curvature. Let $(M,g)$ be an $n-$dimensional complete Riemannian manifolds and let $d$ be the geodesic distance associated to the Riemannian metric $g$ on $M$. For each $P\in M$ and $\rho>0$, let $B(P,\rho) = \{x\in M\, :\, d(x,P) < \rho\}$ denote the open geodesic ball with center $P\in M$ and radius $\rho >0$. Let $dV_g$ denote the canonical volume element on $(M,g)$, the volume of a bounded open set $\Om \subset M$ is given by
\[
\Vol_g(\Om) = \int_\Om dV_g.
\]
In general, we have for any $P \in M$ that
\begin{equation}\label{eq:locallimit}
\lim_{\rho\to 0^+} \frac{\Vol_g(B(P,\rho))}{\om_n \rho^n} =1
\end{equation}
where $\om_n$ denotes the volume of unit ball in $\R^n$.

If $\{x^i\}_{i=1}^n$ is a local coordinate system, then we can write
\[
g = \sum_{i,j=1}^n g_{ij} dx^i dx^j.
\]
In such a local coordinate system, the Laplace-Beltrami operator $\De$ with respect to the metric $ds^2$ is of the form
\[
\De_g = \sum_{i,j=1}^n \frac1{\sqrt{g}} \frac{\partial}{\partial x_i}\lt(\sqrt{|g|} g^{ij} \frac{\partial}{\partial x^j}\rt),
\]
where $|g| =\text{\rm det}(g_{ij})$ and $(g^{ij}) = (g_{ij})^{-1}$. Let us denote by $\na_g$ the corresponding gradient. Then
\[
\la \na_g u,\na_g v\ra = \sum_{i,j=1}^n g^{ij} \frac{\pa u}{\pa x^i} \frac{\pa v}{\pa x^j}.
\]
For simplicity, we shall use the notation $|\al|= \sqrt{\la \al, \al\ra}$ for any $1-$form $\al$.

Let $K_M$ be the sectional curvature on $M$. A Riemannian manifold $(M,g)$ is called Cartan--Hadamard manifold if it is complete, simply connected and with nonnegative section curvature (i.e., $K_M \leq 0$ along each plane section at each point of $M$). 

If $(M,g)$ is a Cartan--Hadamard manifolds, then for each point $P\in M$, $M$ contains no points conjugate to $P$, and the exponential map $\text{\rm Exp}_P: T_PM \to M$ is a diffeomorphism, where $T_PM$ is the tangent space to $M$ at $P$ (see, e.g., \cite[Chapter ${\rm I}$]{Helgason}). Fix a point $P\in M$ and denote $M$ by $d_P(x) =d(x,P)$ for all $x \in M$. Note that $d_P(x)$ is smooth on $M\setminus\{P\}$ and satisfies 
\[
|\na_g d_P(x)| =\la \na_g d_P(x),\na_g d_P(x)\ra^{\frac12} =1,\qquad x \in M \setminus\{P\}.
\]
Moreover, since $\text{\rm Exp}_P$ is a diffeomorphism, then the function
\[
d_P(x)^2 = \|\text{\rm Exp}_P^{-1}(x)\|^2 \in C^\infty(M).
\]
The radial derivation $\pa_\rho = \frac{\pa}{\pa \rho}$ is defined for any function $f$ on $M$ by 
\[
\pa_\rho f(x) = \frac{d (f\circ \text{\rm Exp}_P)}{dr}(\text{\rm Exp}_P^{-1}(x)),
\]
where $\frac{d}{dr}$ denotes the radial derivation on $T_PM$, i.e., 
\[
\frac{d}{dr} F(u) =  \frac{\la u, \na F(u)\ra}{|u|},\qquad u\in T_PM \setminus\{0\}.
\]

Let $(M,g)$ be a complete Riemannian manifold. We introduce the density function $J(u,t)$ of the volume form in normal coordinates as follows (see, e.g., \cite[pp. $166-167$]{GHL}). Choose an orthonormal basis $\{u, e_2,\ldots,e_n\}$ on $T_PM$ and let $c(t) =\text{\rm Exp}_P(tu)$ be a geodesic curve. The Jacobian fields $\{Y_i(t)\}_{i=2}^n$ satisfy $Y_i(0) =0$, $Y_i'(0) =e_i$, so that the density function can be given by
\[
J(u,t) = t^{1-n} \sqrt{\text{\rm det}(\la Y_i(t), Y_j(t)\ra)},\quad t >0.
\]
We note that $J(u,t)$ does not depend on $\{e_2,\ldots,e_n\}$ and $J(u,t)\in C^\infty(T_PM \setminus\{0\})$ by the definition of $J(u,t)$. Moreover, if we set $J(u,0)\equiv 1$ then $J(u,t) \in C(T_PM)$ and has the following asymptotic expansion
\begin{equation}\label{eq:density}
J(u,t) = 1 + O(t^2)
\end{equation}
as $t\to 0$ since $Y_i(t)$ has the asymptotic expansion (see, e.g., \cite[p. $169$]{GHL})
\[
Y_i(t) = t e_i -\frac{t^3}6 R(c'(t), e_i) c'(t) + o(t^3),
\]
as $t\to 0$, where $R(\cdot, \cdot)$ is the curvature tensor on $M$.

From the definition of $J(u,t)$, we have the following polar coordinate on $M$
\begin{equation}\label{eq:polar}
\int_M f dV =\int_{S^{n-1}}  \int_0^{\rho(u)}f(\text{\rm Exp}_P(tu)) J(u,t) t^{n-1} dt du, 
\end{equation}
where $du$ denotes the canonical measure of the unit sphere of $T_PM$ and $\rho(u)$ denotes the distance to the cut-locus in the direction $u$ (see \cite[Section 2.C.7]{GHL} for the definition of cut-locus). Moreover, the Laplacian of the distance function $d_P(x)$ has the following expansion via the function $J(u,t)$ (see, e.g., \cite[Section 4.B.2]{GHL})
\begin{equation}\label{eq:Lapofdist}
\De_g d_P(x) = \frac{n-1}{d_P(x)} + \frac{J'(u_x,d_P(x))}{J(u_x,d_P(x))},\quad \rho >0,
\end{equation}
for any point $x\not =p$ which is not on the cut-locus of $P$, where $u_x$ is the unique direction in $S^{n-1} \subset T_PM$ such that $x = \text{Exp}_P(d_P(x) u_x)$ and  $J'(u,t) = \frac{\pa J(u,t)}{\pa t}$ with $t < \rho(u)$. Therefore, for any radial function $f(d_P)$ on $M$, we have
\begin{equation}\label{eq:Lapofradial}
\De_g f(d_P(x)) = f''(d_P(x)) + \lt(\frac{n-1}{d_P(x)} + \frac{J'(u_x,d_P(x))}{J(u_x,d_P(x))}\rt) f'(d_P(x)),
\end{equation}
for any point $x\not =P$ which is not on the cut-locus of $P$. Note that if the sectional curvature $K_M$ is constant then $J(u,t)$ depends only on $t$. We denote by $J_b(t)$ the corresponding density function if $K_M\equiv -b$ for some $b\geq 0$. Hence
\[
J_b(t) = \begin{cases}
1 &\mbox{if $b =0$}\\
\lt(\frac{\sinh(\sqrt{b}t)}{\sqrt{b} t}\rt)^{n-1} &\mbox{if $b>0$}.
\end{cases}
\]
For $b \geq 0$, we consider the function $\text{\bf ct}_b :(0,\infty) \to \R$ defined by
\[
\text{\bf ct}_b(t) = 
\begin{cases}
\frac1 t&\mbox{if $b=0$}\\
\sqrt{b} \coth(\sqrt{b} t)&\mbox{if $b>0$},
\end{cases}
\]
and the function $\text{\bf D}_b :[0,\infty) \to \R$ defined by
\[
\text{\bf D}_b(t) = 
\begin{cases}
0 &\mbox{if $t =0$}\\
t \text{\bf ct}_b(t) -1 &\mbox{if $t>0$}.
\end{cases}
\]
Clearly, we have $\text{\bf D}_b \geq 0$.

In our proofs below, we will need the following Bishop--Gunther comparison theorem (see, e.g., \cite[p. $172$]{GHL} for its proof) which says that if the sectional curvature $K_M$ on $M$ satisfies $K_M \leq -b$ for some $b\geq 0$ then
\begin{equation}\label{eq:Bishop}
\frac{J'(u,t)}{J(u,t)} \geq \frac{J_b'(t)}{J_b(t)} = \frac{n-1}t \bD_b(t),\qquad t>0.
\end{equation}
In particular, the function $t \to J(u,t)$ is non-decreasing for any $u\in S^{n-1}$ hence the function $\rho \to \frac{\Vol_g(B(x,\rho))}{\rho^n}$ is non-decreasing. Combining this together with \eqref{eq:locallimit}, we obtain
\begin{equation}\label{eq:comparevolume}
\Vol_g(B(x,\rho)) \geq \om_n \rho^n, \quad \forall x\in M,\, \rho >0. 
\end{equation}
Furthermore, equality holds in \eqref{eq:comparevolume} then $B(x,\rho)$ is isometric to $B_\rho(0)$ (see, e.g., \cite[Theorem $III.4.2$]{Chavel}).

If $(M,g)$ has non-negative Ricci curvature, then the function $\rho \to \frac{\Vol_g(B(x,\rho))}{\rho^n}$ is non-increasing. Combining this together with \eqref{eq:locallimit}, we obtain
\begin{equation}\label{eq:comparevolumea}
\Vol_g(B(x,\rho)) \leq \om_n \rho^n, \quad \forall x\in M,\, \rho >0. 
\end{equation}
Furthermore, equality holds in \eqref{eq:comparevolume} then $B(x,\rho)$ is isometric to $B_\rho(0)$ (see, e.g., \cite[Theorem $III.4.4$]{Chavel}).

%%%%%%%%%%%%%%%%%%%%%%%%%%%%%%%%%%%%%%%%%%%%%%%%%%%%%%%%%%%
%%%%%%%%%%%%%%%%%%%%%%%%%%%%%%%%%%%%%%%%%%%%%%%%%%%%%%%%
%%%%%%%%%%%%%%%%%%%%%%%%%%%%%%%%%%%%%%%%%%%%%%%%%%%%
%%%%%%%%%%%%%%%%%%%%%%%%%%%%%%%%%%%%%%%%%%%%%%%

\section{The sharp CKN inequalities: Proof of Theorems \ref{myCKN} and \ref{CKNonCH}}
This section is devoted to proved the CKN inequalities in Cartan--Hadamard manifolds. Let $(M,g)$ be an $n-$dimensional Cartan--Hadarmad manifold with $n \geq 2$. Suppose that $K_M \leq -b$ with $b\geq 0$. Furthermore, we will prove a quantitative version of the CKN inequalities by adding the a non-negative remainder terms concerning to the upper bound of $K_M$. Let $P\in M$ be fixed, for $p> 1$ and $\xi, \eta \in T_PM$, we denote
\begin{equation}\label{eq:Remainder}
R_p(\xi, \eta) = \frac1p |\eta|^p + \frac{p-1}p |\xi|^p -|\xi|^{p-2} \la \xi, \eta\ra.
\end{equation}
By the convexity of $\xi \to |\xi|^p$ we see that $R_p(\xi,\eta) \geq 0$ with equality if and only if $\xi = \eta$. Furthermore, we can see that
\[
R_p(\xi, \eta) = (p-1)\int_0^1 |t\xi + (1-t)\eta|^{p-2} t dt |\xi -\eta|^2.
\]
For $p >1$, we always use $p'$ to denote the conjugate exponent of $p$, i.e., $p' =p/(p-1)$. Let $g =(g_1,\ldots,g_m)$, $m\geq 1$ be such that each function $g_i$ is in $L^{p'}(M)$, we define the new function $\mathcal D_p (g)$ on $M$ by
\[
\mathcal D_p(g)(x) =
\begin{cases}
0 &\mbox{if $g(x) =0$}\\
\frac{g(x)}{|g(x)|} |g(x)|^{\frac1{p-1}}&\mbox{if $g(x) \not =0$}
\end{cases}
\]
Then $|\mathcal D_p(g)| \in L^p(M)$. With these notation, we have the following equality which improves H\"older inequality: let $f =(f_1,\ldots,f_m) \in L^p(M)$ and $g =(g_1,\ldots,g_m) \in L^{p'}(M)$ be non-identically zero functions, then it holds
\begin{equation}\label{eq:refineHolder}
\int_M f \cdot g\, dV_g = \|f\|_p \|g\|_{p'}\lt(1 -\int_M R_p\lt(\frac{f}{\|f\|_p},\frac{\mathcal D_p(g)}{\|g\|_{p'}^{\frac1{p-1}}}\rt) dV_g\rt),
\end{equation}
here we use the notation $\|f\|_p= (\int_{M} |f|^p dV_g)^{1/p}$ and for a subset $\Om \subset M$, we shall denote $\|f\|_{p,\Om} = (\int_\Om |f|^p dV_g)^{1/p}$.

With these notation, we can state our first main result of this section as follows.
\begin{theorem}\label{Mainequality}
Suppose $n\geq 2$ and $p,q,r,\al,\be, \ga$ satisfy the conditions \eqref{eq:integrabilitycond} and \eqref{eq:balancecond}. Let $(M,g)$ be an $n-$dimensional Cartan--Hadamard manifold. Then the following equality holds true for any function $f \in C_0^\infty(M)$
\begin{align}\label{eq:mainequality}
\int_M \frac{|f|^r}{d_P^{\ga r}} dV_g &= \frac{r}{n-\ga r} \lt\|\frac{\pa_\rho f}{d_P^\al}\rt\|_{p,\sf}\lt\|\frac{|f|^{r-1}}{d_P^{\be/{p'}}}\rt\|_{p',\sf}\notag\\
&\quad -\frac{r}{n-\ga r}\lt\|\frac{\pa_\rho f}{d_P^\al}\rt\|_{p,\sf}\lt\|\frac{|f|^{r-1}}{d_P^{\be/{p'}}}\rt\|_{p',\sf} \times \notag\\
&\qquad\qquad \quad\qquad \times \int_{\sf}R_P\lt(\frac{d_P^{-\al}\pa_\rho f}{\|d_P^{-\al} \pa_\rho f\|_{p,\sf}},\frac{\mathcal D_p(-d_P^{-\frac\be{p'}} f|f|^{r-2})}{\|d_P^{-\frac\be{p'}}|f|^{r-1}\|_{p',\sf}^{\frac1{p-1}}}\rt) dV_g\notag\\
&\quad -\frac1{n-\ga r} \int_{\sf} \frac{|f|^r}{d_P(x)^{\ga r}} \frac{d_P(x) J'(u_x,d_P(x))}{J(u_x,d_P(x))} dV_g,
\end{align}
where $u_x$ denotes the unique unit vector in $T_PM$ such that $x= \text{\rm Exp}_P(d_P(x)u_x)$.
\end{theorem}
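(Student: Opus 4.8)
The plan is to establish \eqref{eq:mainequality} by an integration-by-parts (divergence) argument against a radial vector field, isolating the curvature contribution, and then to convert the resulting scalar product into an exact identity via the refined H\"older equality \eqref{eq:refineHolder} in place of the H\"older inequality. On the Cartan--Hadamard manifold $(M,g)$ the exponential map at $P$ is a global diffeomorphism, so $d_P$ is smooth on $M\setminus\{P\}$ with $|\na_g d_P|=1$ and no cut-locus to worry about; in particular the expansion \eqref{eq:Lapofdist} for $\De_g d_P$ holds on all of $M\setminus\{P\}$. I would introduce the vector field $V = d_P^{1-\ga r}\na_g d_P$ and compute, using $\d(\phi(d_P)\na_g d_P) = \phi'(d_P) + \phi(d_P)\De_g d_P$ (valid since $|\na_g d_P|=1$) together with \eqref{eq:Lapofdist},
\begin{equation*}
\d\, V = (n-\ga r)\, d_P^{-\ga r} + d_P^{1-\ga r}\,\frac{J'(u_x,d_P)}{J(u_x,d_P)}.
\end{equation*}
This already exhibits both the factor $n-\ga r$ and the curvature density $J'/J$ appearing in the claimed formula.

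Next I would multiply by $|f|^r$ and integrate. Since $V$ is singular at $P$ (the power $1-\ga r$ may be negative and $\na_g d_P$ is undefined at $P$), the divergence theorem must be applied on $M\setminus B(P,\vep)$ and the limit $\vep\to0$ taken. Writing $\d(|f|^r V) = |f|^r\,\d V + r|f|^{r-2}f\,\la\na_g f,V\ra$ and using that $f$ has compact support, the only boundary term is over $\pa B(P,\vep)$, where $\la V,\nu\ra = -\vep^{1-\ga r}$ and the area element is of order $\vep^{n-1}$ (by \eqref{eq:density} and \eqref{eq:polar}); hence this term is $O(\vep^{\,n-\ga r})\to0$ because $n-\ga r>0$ by the first condition in \eqref{eq:integrabilitycond}. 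Using $\la\na_g f,\na_g d_P\ra = \pa_\rho f$, this yields
\begin{equation*}
(n-\ga r)\int_M \frac{|f|^r}{d_P^{\ga r}}\,dV_g = -r\int_{\sf}|f|^{r-2}f\,d_P^{1-\ga r}\,\pa_\rho f\,dV_g - \int_{\sf}\frac{|f|^r}{d_P^{\ga r}}\,\frac{d_P\,J'(u_x,d_P)}{J(u_x,d_P)}\,dV_g,
\end{equation*}
which already produces the curvature term of \eqref{eq:mainequality}.

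It then remains to treat the first integral on the right. The balance condition \eqref{eq:balancecond} gives $\ga r = 1+\al+\be/p'$, so that $1-\ga r+\al=-\be/p'$ and $-|f|^{r-2}f\,d_P^{1-\ga r}\,\pa_\rho f = u\cdot v$ with $u = d_P^{-\al}\pa_\rho f$ and $v = -d_P^{-\be/p'}f|f|^{r-2}$. Applying \eqref{eq:refineHolder} (with $m=1$) over $\sf$ to $\int_{\sf}u\,v\,dV_g$ turns this product into $\lt\|u\rt\|_{p,\sf}\,\lt\|v\rt\|_{p',\sf}$ times $1$ minus the $R_p$-deficit; since $\lt\|u\rt\|_{p,\sf} = \lt\|d_P^{-\al}\pa_\rho f\rt\|_{p,\sf}$ and $\lt\|v\rt\|_{p',\sf} = \lt\||f|^{r-1}d_P^{-\be/p'}\rt\|_{p',\sf}$, and $\mathcal D_p(v) = \mathcal D_p(-d_P^{-\be/p'}f|f|^{r-2})$, these are exactly the two norms and the remainder integral displayed in \eqref{eq:mainequality}. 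Multiplying by $r/(n-\ga r)>0$ assembles the full identity.

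The main obstacle is the justification of the integration by parts. Two points need care: first, the vanishing of the boundary integral at $P$, which is precisely where the integrability conditions \eqref{eq:integrabilitycond} enter (they guarantee $n-\ga r>0$ and the finiteness of each integral near $P$); second, the differentiation of $|f|^r$, which is routine for $r\geq1$ but, for $0<r<1$, requires a standard regularization such as replacing $|f|$ by $(f^2+\de^2)^{1/2}-\de$ and letting $\de\to0$, the limiting integrals being finite because the integrands behave like $d_P^{-\ga r}$ near $P$ (integrable since $\ga r<n$) and like $|f|^{r-1}$ near the zero set of $f$ (integrable since $r>0$). Everything else is a direct computation; crucially, no inequality is invoked, so the conclusion is an exact equality.
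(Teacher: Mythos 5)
Your proof is correct and is essentially the paper's own argument: an integration by parts that produces the curvature term through $J'/J$, followed by the refined H\"older identity \eqref{eq:refineHolder} applied with $u=d_P^{-\al}\pa_\rho f$ and $v=-d_P^{-\be/p'}f|f|^{r-2}$, the balance condition \eqref{eq:balancecond} giving exactly $d_P^{1-\ga r}=d_P^{-\al}d_P^{-\be/p'}$. The only difference is mechanical: the paper passes to polar coordinates \eqref{eq:polar} and integrates by parts in the radial variable, where the factor $t^{n-\ga r}$ vanishes at $t=0$ (so no boundary analysis at $P$ is needed) and the $J'$ term arises directly from differentiating the volume density, whereas you apply the divergence theorem to $V=d_P^{1-\ga r}\na_g d_P$ on $M\setminus B(P,\vep)$ via \eqref{eq:Lapofdist} and verify the boundary term is $O(\vep^{\,n-\ga r})\to 0$, which is correct since $n-\ga r>0$ by \eqref{eq:integrabilitycond}. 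Incidentally, your closing remark on regularizing $|f|^r$ when $0<r<1$ addresses a genuine technical point that the paper's proof passes over in silence.
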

Since $K_M \leq 0$ then $J'\geq 0$ by \eqref{eq:Bishop}. In the other hand $R_p \geq 0$. Hence \eqref{eq:mainequality} implies the CKN inequalities in Theorems \ref{myCKN} and \ref{CKNonCH} since
\[
\int_{\sf} \frac{|\pa_\rho f|^p}{d_P^{\al p}} dV_g = \int_M \frac{|\pa_\rho f|^p}{d_P^{\al p}} dV_g,
\]
and if $r>1$
\[
\int_{\sf} \frac{|f|^{\frac{p(r-1)}{p-1}}}{d_P(x)^\beta} dV_g = \int_{M} \frac{|f|^{\frac{p(r-1)}{p-1}}}{d_P(x)^\beta} dV_g.
\]
Especially, if $K_M \leq -b$ for some $b \geq 0$, we obtain from \eqref{eq:mainequality} and Gauss lemma, the following quantitative CKN inequalities
\begin{corollary}\label{QuantCKN}
Suppose the assumptions in Theorem \ref{Mainequality} and $K_M \leq -b$ for some $b\geq 0$. Then the following inequalities holds for any function $f\in C_0^\infty(M)$,
\begin{equation}\label{eq:quantCKN}
\int_{M} \frac{|f|^r}{d_P(x)^{\ga r}} \lt(1+ \frac{n-1}{n-\ga r} \bD_b(d_P(x))\rt) dV_g \leq \frac{r}{n-\ga r} \lt\|\frac{\pa_\rho f}{d_P^\al}\rt\|_{p}\lt\|\frac{|f|^{r-1}}{d_P^{\be/{p'}}}\rt\|_{p',\sf},
\end{equation}
and
\begin{equation}\label{eq:quantCKNfull}
\int_{M} \frac{|f|^r}{d_P(x)^{\ga r}} \lt(1+ \frac{n-1}{n-\ga r} \bD_b(d_P(x))\rt) dV_g \leq \frac{r}{n-\ga r} \lt\|\frac{\na_g f}{d_P^\al}\rt\|_{p}\lt\|\frac{|f|^{r-1}}{d_P^{\be/{p'}}}\rt\|_{p',\sf},
\end{equation}
\end{corollary}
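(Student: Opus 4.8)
The plan is to read both inequalities off directly from the identity \eqref{eq:mainequality} of Theorem \ref{Mainequality}, using only two sign facts: the nonnegativity of $R_p$ and the Bishop--Gunther bound \eqref{eq:Bishop}. First I would transpose the last term of \eqref{eq:mainequality}, namely
\[
-\frac1{n-\ga r} \int_{\sf} \frac{|f|^r}{d_P(x)^{\ga r}} \frac{d_P(x) J'(u_x,d_P(x))}{J(u_x,d_P(x))}\, dV_g,
\]
to the left-hand side, so that the identity reads
\begin{align*}
&\int_M \frac{|f|^r}{d_P^{\ga r}} dV_g + \frac1{n-\ga r} \int_{\sf} \frac{|f|^r}{d_P^{\ga r}} \frac{d_P\, J'(u_x,d_P)}{J(u_x,d_P)}\, dV_g \\
&\qquad = \frac{r}{n-\ga r} \lt\|\frac{\pa_\rho f}{d_P^\al}\rt\|_{p,\sf}\lt\|\frac{|f|^{r-1}}{d_P^{\be/{p'}}}\rt\|_{p',\sf}\lt(1 -\int_{\sf} R_p(\cdots)\, dV_g\rt).
\end{align*}
Since $R_p \geq 0$, the parenthetical factor on the right is at most $1$; discarding it yields an inequality whose right-hand side is exactly $\frac{r}{n-\ga r}\,\|d_P^{-\al}\pa_\rho f\|_{p,\sf}\,\|d_P^{-\be/p'}|f|^{r-1}\|_{p',\sf}$. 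Because $\pa_\rho f$ is supported in $\sf$, the first factor equals $\|d_P^{-\al}\pa_\rho f\|_{p}$, matching the right-hand side of \eqref{eq:quantCKN}.

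For the curvature term on the left, the hypothesis $K_M \leq -b$ lets me invoke \eqref{eq:Bishop}, which gives $\frac{J'(u,t)}{J(u,t)} \geq \frac{n-1}{t}\bD_b(t)$ for all $t>0$; multiplying by $t=d_P(x)$ turns the integrand $\frac{d_P J'}{J}$ into the lower bound $(n-1)\bD_b(d_P(x))$. Hence the second integral on the left dominates $\frac{n-1}{n-\ga r}\int_{\sf}\frac{|f|^r}{d_P^{\ga r}}\bD_b(d_P)\, dV_g$. Combining this with the obvious identity $\int_M \frac{|f|^r}{d_P^{\ga r}} dV_g = \int_{\sf}\frac{|f|^r}{d_P^{\ga r}}\, dV_g$ and factoring out the common density, the left-hand side is bounded below by $\int_M \frac{|f|^r}{d_P^{\ga r}}\big(1+\frac{n-1}{n-\ga r}\bD_b(d_P)\big)\, dV_g$, which is precisely the left-hand side of \eqref{eq:quantCKN}. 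This establishes \eqref{eq:quantCKN}; then \eqref{eq:quantCKNfull} follows at once from the Gauss lemma $|\pa_\rho f|\leq|\na_g f|$, which upgrades the factor $\|d_P^{-\al}\pa_\rho f\|_p$ to $\|d_P^{-\al}\na_g f\|_p$ on the right while leaving the left-hand side untouched.

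I do not anticipate a genuine obstacle: all the analytic content is carried by the identity \eqref{eq:mainequality}, and the corollary is obtained by the two monotone estimates above. The only points demanding care are bookkeeping ones. I must confirm that $n-\ga r>0$, so that the coefficient $\frac{1}{n-\ga r}$ carries the correct sign throughout; this is guaranteed by the integrability condition \eqref{eq:integrabilitycond}, which forces $\frac1r-\frac\ga n>0$ and hence $n-\ga r>0$. I must also verify that the passage between $\int_{\sf}$ and $\int_M$ is legitimate in each place it is used: for the two $|f|$-weighted integrals this is immediate since $f$ vanishes off $\sf$, and for the curvature integral enlarging the domain only strengthens the bound because $\bD_b\geq 0$. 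With these checks in place, the chain of estimates is purely mechanical.
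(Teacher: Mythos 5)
Your proposal is correct and follows essentially the same route as the paper: the paper likewise derives \eqref{eq:quantCKN} directly from the identity \eqref{eq:mainequality} by discarding the nonnegative $R_p$ term, lower-bounding $d_P J'/J$ via the Bishop--Gunther comparison \eqref{eq:Bishop}, and then obtains \eqref{eq:quantCKNfull} from the Gauss lemma $|\pa_\rho f|\leq|\na_g f|$. Your bookkeeping remarks (sign of $n-\ga r$ and the equality of integrals over $\sf$ and $M$) match the paper's own observations following Theorem \ref{Mainequality}.
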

The case $p =r$, $\beta = p + \de$ and $\al = \frac\de p$ with $\de < n-p$, Corollary \ref{QuantCKN} implies the following quantitative weighted Hardy inequalities on Cartan--Hadamard manifolds which are recently prove by the author \cite{Nguyen2017}
\begin{equation}\label{eq:QH}
\int_M \frac{|f|^p}{d_P^{p+ \de}}\lt(1+ \frac{p(n-1)}{n-p-\de} \bD_b(d_P(x))\rt)dV_g \leq \lt(\frac{p}{n-p-\de}\rt)^p \int_M \frac{|\pa_\rho f|^p}{d_P^\de} dV_g,
\end{equation}
and
\begin{equation}\label{eq:QHfull}
\int_M \frac{|f|^p}{d_P^{p+ \de}}\lt(1+ \frac{p(n-1)}{n-p-\de} \bD_b(d_P(x))\rt)dV_g \leq \lt(\frac{p}{n-p-\de}\rt)^p \int_M \frac{|\na_g f|^p}{d_P^\de} dV_g.
\end{equation}
These inequalities improve the weighted Hardy inequalities on Cartan--Hadamard manifolds due to Yang, Su, and Kong \cite{YSK}. The case $p=2$ and $\de =0$, the inequality \eqref{eq:QHfull} was proved by Krist\'aly in \cite{K2}. We refer readers to \cite{Nguyen2017} for more results about the critical Hardy, and Rellich type inequalities on Cartan--Hadamard inequality. The last comment in the case $(M,g)$ having constant sectional curvature, i.e., $K_M = -b$ for some $b\geq 0$ is that the extremal for \eqref{eq:quantCKN} and \eqref{eq:quantCKNfull} exist if one of the conditions (i)--(v) in Theorem \ref{myCKN} holds. Moreover, a family of extremal is given by the same family of extremal in the corresponding case in Euclidean space with $|x|$ being replaced by $d_P(x)$. Indeed, in this case, we always have
\[
R_P\lt(\frac{d_P^{-\al}\pa_\rho f}{\|d_P^{-\al} \pa_\rho f\|_{p,\sf}},\frac{\mathcal D_p(-d_P^{-\frac\be{p'}} f|f|^{r-2})}{\|d_P^{-\frac\be{p'}}|f|^{r-1}\|_{p',\sf}^{\frac1{p-1}}}\rt) =0
\]
if $f$ has such form, and $\rho J'(u,\rho)/J(u,\rho) = (n-1)\bD_b(\rho)$. Therefore equality holds true in \eqref{eq:mainequality}. Furthermore, the proof of Theorems \ref{Maintheorem1} and \ref{Maintheorem3} below can be applied to prove a rigidity results for Cartan--Hadamard manifolds $(M,g)$ with $K_M\leq -b$ for some $b\geq 0$ such that extremal for the inequality \eqref{eq:quantCKNfull} exists. Such a manifold should be isometric to a manifold of constant sectional curvature $-b$. Evidently, the case $b=0$ is considered in Theorems \ref{Maintheorem1} and  \ref{Maintheorem3}.

Let us prove Theorem \ref{Mainequality}

\begin{proof}[Proof of Theorem \ref{Mainequality}]
The proof is simple by using integration by parts. Indeed, let $f\in \C_0^\infty(M)$, by abusing notation we still denote by $\sf$ for the preimage of support of $f$ in $T_PM$ and $f(tu)$ for $f(\text{\rm Exp}_P(tu))$. Using polar coordinate \eqref{eq:polar}, we have
\begin{align*}
\int_M \frac{|f|^r}{d_P(x)^{\ga r}} dV_g & = \int_{S^{n-1}} \int_{\sf \cap [0,\infty)} |f(t u)|^r t^{n-\ga r -1} J(u,t) dt du\\
&=\frac1{n-\ga r} \int_{S^{n-1}} \int_{\sf \cap [0,\infty)} |f(t u)|^r (t^{n-\ga r})' J(u,t) dt du.
\end{align*}
Using integration by parts and the assumption $n-\ga r >0$ , we get
\begin{align*}
\int_M \frac{|f|^r}{d_P(x)^{\ga r}} dV_g& = -\frac{r}{n-\ga r} \int_{S^{n-1}} \int_{\sf \cap [0,\infty)} |f(t u)|^{r-2} f(t u)\pa_\rho f(t u) t^{n-\ga r} J(u,t) dt du\\
&=-\frac1{n-\ga r} \int_{S^{n-1}} \int_{\sf \cap [0,\infty)} |f(tu)|^r t^{n-\ga r} J'(u,t) dt du.
\end{align*}
Using again polar coordinate \eqref{eq:polar} and the condition \eqref{eq:balancecond}, we arrive
\begin{align*}
\int_M \frac{|f|^r}{d_P(x)^{\ga r}} dV_g& = -\frac{r}{n-\ga r}\int_{\sf} \frac{|f|^{r-2} f}{d_P^{\be /{p'}}} \frac{\pa_\rho f}{d_P^\al} dv_g  -\frac1{n-\ga r} \int_{\sf} \frac{|f|^r}{d_P^{\ga r}} \frac{d_P(x) J'(u_x,d_P(x))}{J(u_x, d_P(x))} dV_g.
\end{align*}
Now, using \eqref{eq:refineHolder}, we obtain our desired equality \eqref{eq:mainequality}.
\end{proof}

We are now ready to prove Theorems \ref{myCKN} and \ref{CKNonCH}.

\begin{proof}[Proof of Theorem \ref{myCKN}]
The inequalities are trivial by remarks after Theorem \ref{Mainequality}. The sharpness of constant $r/(n-\ga r)$ is immediately checked by functions given in Theorem \ref{myCKN} corresponding to the conditions (i) -- (v). Indeed, in these cases, we have 
\[
R_P\lt(\frac{d_P^{-\al}\pa_\rho f}{\|d_P^{-\al} \pa_\rho f\|_{p,\sf}},\frac{\mathcal D_p(-d_P^{-\frac\be{p'}} f|f|^{r-2})}{\|d_P^{-\frac\be{p'}}|f|^{r-1}\|_{p',\sf}^{\frac1{p-1}}}\rt) = 0,
\]
on $\sf$ and $J'\equiv 0$. This finishes the proof of Theorem \ref{myCKN}.
\end{proof}

\begin{proof}[Proof of Theorem \ref{CKNonCH}]
The inequalities are trivial by remarks after Theorem \ref{Mainequality}. Let us verify the sharpness of the constant $\frac{r}{n-\ga r}$. We known from Theorem \ref{myCKN} that if one of the conditions (i) -- (v) in Theorem \ref{myCKN} holds true, then
\[
\frac{r}{n-\ga r} = \sup_{f\in C_0^\infty(\R^n)} \frac{\int_{\R^n} |x|^{-\ga r} |f|^r dx}{\lt(\int_{\R^n} |x|^{-\al p} |\pa_r f|^p dx\rt)^{\frac1p} \lt(\int_{\sf} |x|^{-\beta} |f|^{p'(r-1)} dx\rt)^{\frac1{p'}}}.
\]
Furthermore, we can assume that the supremum is taken on non-negative radial functions. Indeed, from Theorem \ref{myCKN}, we see that the extremal of CKN inequalities contain non-negative radial functions by taking $\lam \equiv\text{\rm const}$, and hence we can approximate these functions by non-negative radial functions in $C_0^\infty(\R^n)$. For any $\ep >0$, we can chose a non-negative radial function $f_\ep \in C_0^\infty$ such that 
\[
\frac{r}{n-\ga r} -\ep\leq \sup_{f_\ep\in C_0^\infty(\R^n)} \frac{\int_{\R^n} |x|^{-\ga r} |f_\ep|^r dx}{\lt(\int_{\R^n} |x|^{-\al p} |\pa_r f_\ep|^p dx\rt)^{\frac1p} \lt(\int_{\sf_\ep} |x|^{-\beta} |f_\ep|^{p'(r-1)} dx\rt)^{\frac1{p'}}}.
\]
For $\de>0$, denote $f_{\ep,\de}(x) = f_\ep(x/\de)$ then $\sf_{\ep,\de}=\de \sf_\ep$. The scaling invariant of CKN inequalities implies that
\begin{equation}\label{eq:ss}
\frac{r}{n-\ga r} -\ep\leq \sup_{f_\ep\in C_0^\infty(\R^n)} \frac{\int_{\R^n} |x|^{-\ga r} |f_{\ep,\de}|^r dx}{\lt(\int_{\R^n} |x|^{-\al p} |\pa_r f_{\ep,\de}|^p dx\rt)^{\frac1p} \lt(\int_{\sf_{\ep,\de}} |x|^{-\beta} |f_\ep|^{p'(r-1)} dx\rt)^{\frac1{p'}}}
\end{equation}
for any $\de >0$.

Suppose that $f_\ep(x) = \vphi(|x|)$ for some function $\vphi$ with $\vphi\equiv 0$ on $[a,\infty)$ for some $a>0$. Define the functions $F_\de$ on $M$ by $F(x) = \vphi(d_P(x)/\de)$. Evidently, $\text{\rm supp} F_\de \subset B(P,a\de)$. Using polar coordinate \eqref{eq:polar}, we have
\begin{align*}
\int_{M} \frac{|F_\de|^r}{d_P^{\ga r}} dV_g&= \int_{S^{n-1}} \int_{\de \text{\rm supp}\vphi} \vphi(t/\de)^r t^{n-\ga r -1} J(u,t) dt du\\
&= (1+ O(\de^2))\int_{S^{n-1}} \int_{\de \text{\rm supp}\vphi} \vphi(t/\de)^r t^{n-\ga r -1} J(u,t) dt du\\
&=(1+ O(\de^2)) \int_{\R^n} \frac{|f_{\ep,\de}|^r}{|x|^{\ga r}} dx,
\end{align*}
here we use \eqref{eq:density}. Similarly, we get
\[
\int_M \frac{|\pa_\rho F_\de|^p}{d_P^{\al p}} dV_g = (1+ O(\de^2)) \int_{\R^n} \frac{|\pa_r f_{\ep,\de}|^p}{|x|^{\al p}}dx,
\]
and
\[
\int_{\text{\rm supp}F_\de} \frac{|F_\de|^{\frac{p(r-1)}{p-1}}}{d_P^\beta}dV_g =(1+ O(\de^2))\int_{\sf_{\ep,\de}} \frac{|f_{\ep,\de}|^{\frac{p(r-1)}{p-1}}}{|x|^{\beta}} dx,
\]
here we use the fact $f$ is radial. Combining these three equalities together with \eqref{eq:ss}, we obtain
\[
\liminf_{\de \to 0} \frac{\int_{M} \frac{|F_\de|^r}{d_P^{\ga r}} dV_g}{\lt(\int_M \frac{|\pa_\rho F_\de|^p}{d_P^{\al p}} dV_g\rt)^{\frac1p} \lt(\int_{\text{\rm supp}F_\de} \frac{|F_\de|^{\frac{p(r-1)}{p-1}}}{d_P^\beta}dV_g\rt)^{\frac{p-1}p} } \geq \frac{r}{n-\ga r} - \ep,
\]
for any $\ep >0$. This implies the sharpness of $r/(n-\ga r)$.
\end{proof}

%%%%%%%%%%%%%%%%%%%%%%%%%%%%%%%%%%%%%%%%%%%%%%%%%%%%
%%%%%%%%%%%%%%%%%%%%%%%%%%%%%%%%%%%%%%%%%%%%%%%
%%%%%%%%%%%%%%%%%%%%%%%%%%%%%%%%%%%%%%%%%%%

\section{Rigidity results on Cartan--Hadamard manifolds: Proof of Theorems \ref{Maintheorem1} and \ref{Maintheorem3}}
In this section, we give the proof of Theorems \ref{Maintheorem1} and \ref{Maintheorem3}. The main ingredients in our proofs is Theorem \ref{Mainequality}, the Gauss lemma and the explicit solutions of several ordinary differential equations related to the Euler--Lagrange equations of the extremal for the CKN inequalities. We first prove Theorem \ref{Maintheorem1}.

\begin{proof}[Proof of Theorem \ref{Maintheorem1}]
Let $f$ be an extremal which is not identically zero for $\CKNP$. Evidently, $|f|$ also is an extremal for $\CKNP$. Hence we can assume that $f$ is non-negative. Moreover, by Theorem \ref{CKNonCH} (more precisely, inequality \eqref{eq:posexponentCH}), we must have
\[
\int_M \frac{|\pa_\rho f|^p}{d_P(x)^{\al p}} dV_g \geq \int_M \frac{|\na_ g f|^p}{d_P(x)^{\al p}} dV_g,
\]
which implies $|\pa_\rho| = |\na_g f|$ by Gauss lemma. Therefore, $f$ is radial function, that is, $f$ depends only on $d_P$ or $f =\vphi(d_P)$ with $\vphi :[0,\infty) \to [0,\infty)$. By \eqref{eq:mainequality}, we must have 
\[
R_P\lt(\frac{d_P^{-\al}\pa_\rho f}{\|d_P^{-\al} \pa_\rho f\|_{p,\sf}},\frac{\mathcal D_p(-d_P^{-\frac\be{p'}} f|f|^{r-2})}{\|d_P^{-\frac\be{p'}}|f|^{r-1}\|_{p',\sf}^{\frac1{p-1}}}\rt) =0
\]
and $J'(u_x, d_P(x)) =0$ on $\sf$. The first condition is equivalent to
\[
\pa_\rho f = -c f^{\frac{r-1}{p-1}} d_P^{\al -\frac\beta p}
\]
on $\sf$ for some $c>0$. Writing $f$ as $\vphi(d_P(x))$, the previous equation is equivalent to
\begin{equation}\label{eq:ODE}
\vphi'(t) = -c \vphi(t)^{\frac{r-1}{p-1}} t^{\al -\frac\beta p},
\end{equation}
on $\{\vphi >0\}$. Since $f$ is not identically zero, then $\vphi(0) >0$. The equation \eqref{eq:ODE} has unique solution
\[
\vphi(t) = \lt(\vphi(0)^{\frac{p-r}{p-1}}+ c\frac{r-p}{p-1}\frac{t^{1+ \al -\frac\beta p}}{1+ \al -\frac\beta p}\rt)^{\frac{p-1}{p-r}}
\]
if $r > p > 1$ and
\[
\vphi(t) =\vphi(0) \exp\lt(-\frac{c}{1+ \al -\frac\be p} t^{1+ \al -\frac\be p}\rt)
\]
if $r =p$. Hence $\sf = M$, and the condition $J'(u_x,d_P(x)) =0$ on $\sf$ translates to $J'(u,t) =0$ for any $t>0$ and for each fixed $u\in S^{n-1}$. This implies $J(u,t) \equiv 1$ for any $t>0$ and $u\in S^{n-1}$. Hence $M$ is isometric to $\R^n$.
\end{proof}
Theorem \ref{Maintheorem3} is proved by the same way.

\begin{proof}[Proof of Theorem \ref{Maintheorem3}]
Obviously, as in proof of Theorem \ref{Maintheorem1}, we can assume that the extremal $f$ is non-negative and $f$ is radial function, that is, $f$ depends only on $d_P$ or $f =\vphi(d_P)$ with $\vphi :[0,\infty) \to [0,\infty)$. Moreover, we must have $J'(u_x,d_P(x)) =0$ on $\sf$ and
\[
\pa_\rho f = -c |f|^{\frac{r-p}{p-1}} f d_P^{\al -\frac\be p}
\]
on $\sf$ for some $c >0$. Consequently, $\vphi$ satisfies 
\begin{equation}\label{eq:ODEvarphi}
\vphi'(t) = -c \vphi(t)^{\frac{r-1}{p-1}} t^{\al -\frac\beta p}
\end{equation}
on $\{\vphi > 0\}$. Hence $\vphi$ is strict increasing on $\{\vphi >0\}$. If $0< r < p, r\not=1$ and $1+ \al -\frac\beta p > 0$ then \eqref{eq:ODEvarphi} has only solution of the form
\[
\vphi(t) = \lt(\lam -c\frac{p-r}{p-1} \frac{t^{1+ \al -\frac\be p}}{1+ \al -\frac\be p} \rt)_+^{\frac{p-1}{p-r}}, \quad \lam >0.
\]
If $0< r < p, r\not =1$ and $1+ \al -\frac\beta p =0$ then \eqref{eq:ODEvarphi} has only solution
\[
\vphi(t) = \lt(\lam -c \frac{p-r}{p-1} \ln t\rt)_+^{\frac{p-1}{p-r}},\quad \lam \in \R.
\]
If $0< r < p, r\not=1$, $1 + \al -\frac\beta p < 0$ and $n-\beta + (1+ \al -\frac{\beta}p) \frac{p(r-1)}{p-r}>0$ then \eqref{eq:ODEvarphi} has only solution
\[
\vphi(t) = \lt(-c\frac{p-r}{p-1}\frac{t^{1+\al -\frac\be p}}{1+ \al -\frac \be p} -\lam\rt)_+^{\frac{p-1}{p-r}},\qquad \lam \in \R.
\]
To ensure $\int_{\sf} d_P^{-\beta} f^{\frac{p(r-1)}{p-1}} dV_g < \infty$, we must have $\lam >0$.

The form of function $\vphi$ above shows that $f$ has compact support and there is $r_P >0$ such that $\{f>0\}= B(P,r_P)$. Therefore, the condition $J'(u_x,d_P(x)) =0$ on $\sf$ is equivalent to $J'(u,t) = 0$ for any $t< r_P$ and $u\in S^{n-1}$. Thus $J(u,t) \equiv 1$ for any $t < r_P$ and $u\in S^{n-1}$ which implies $\Vol_g(B(P,r_P)) = \om_n r_P^n$ by polar coordinate \eqref{eq:polar}. Hence $B(P,r_P)$ is isometric to $B_{r_P}(0)$ (see, e.g., \cite[Theorem $III.4.2$]{Chavel}).
\end{proof}

An immediate consequence of Theorem \ref{Maintheorem3} is that if the constant $r/(n-\ga r)$ is attained by an extremal which is not identically zero in $\CKNP$ for a point $P \in M$, then the sectional curvature at $P$ vanishes. Therefore, if if the constant $r/(n-\ga r)$ is attained by an extremal which is not identically zero in $\CKNP$ for any point $P \in M$, then $M$ is flat, i.e., the sectional curvature vanishes at any point of $M$. 
%This implies that the Ricci curvature of $M$ vanishes. By Bishop--Gromov volume comparison theorem \eqref{eq:comparevolumea}, we get $\Vol_g(B(P,r)) \leq \om_n r^n$ for any $P \in M$ and $r>0$. The reversed inequality holds since $M$ is Cartan--Hadamard manifold (by \eqref{eq:comparevolume}. Hence $\Vol_g(B(P,r)) = \om_n r^n$ for any $P \in M$ and $r>0$ which implies $M$ is isometric to $\R^n$. Thus, we have proof the following result.
\begin{corollary}
Suppose the assumptions in Theorem \ref{Maintheorem3}. Then the following statements are equivalent:
\begin{description}
\item (a) $\frac{r}{n-\ga r}$ is achived by an extremal which is not identically zero in $\CKNP$ for any point $P \in M$.
\item (b) M is flat.
\end{description}
\end{corollary}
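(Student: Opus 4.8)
The plan is to read the statement off Theorem \ref{Maintheorem3} pointwise, using only two standard facts: that a Riemannian isometry preserves the curvature tensor, and that a flat Cartan--Hadamard manifold is Euclidean. No new estimate is required; the corollary is essentially the formalization of the remark that precedes it.

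First I would establish (a) $\Rightarrow$ (b). Assume the sharp constant $\frac{r}{n-\ga r}$ is attained in $\CKNP$ by a nonzero extremal for \emph{every} $P\in M$. Fix such a $P$. Theorem \ref{Maintheorem3} then furnishes a radius $r_P>0$ for which the geodesic ball $B(P,r_P)$ is isometric to the Euclidean ball $B_{r_P}(0)\subset\R^n$. Since an isometry carries the Riemann curvature tensor of one manifold onto that of the other and $B_{r_P}(0)$ is flat, the sectional curvature $K_M$ vanishes identically on $B(P,r_P)$; in particular $K_M=0$ at $P$. As $P$ was arbitrary, $K_M\equiv0$ on $M$, i.e.\ $M$ is flat. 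Equivalently, one may extract this directly from the expansion $J(u,t)=1+O(t^2)$ in \eqref{eq:density}: the proof of Theorem \ref{Maintheorem3} yields $J(u,t)\equiv1$ for $t<r_P$, and the $t^2$--coefficient of $J(u,t)$ equals $-\tfrac16\Ric(u,u)$, so its vanishing forces $\Ric(u,u)=0$ for all $u$; on a Cartan--Hadamard manifold each summand of $\Ric(u,u)=\sum_i K_M(u,e_i)$ is nonpositive, hence every sectional curvature at $P$ must vanish.

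Next I would prove (b) $\Rightarrow$ (a). If $M$ is flat, then, being complete and simply connected with vanishing sectional curvature, it is isometric to $\R^n$ (the flat instance of the Cartan--Hadamard theorem, already implicit in the global diffeomorphism property of $\mathrm{Exp}_P$ recorded in Section 2, the Jacobi fields being linear in the flat case). Transporting the radial extremals of Corollary \ref{myCKNfull} --- namely those in cases (ii)--(iv) of Theorem \ref{myCKN}, selected according to the sign of $1+\al-\tfrac\be p$, which is precisely the trichotomy built into the hypotheses here --- through this isometry produces, for each $P\in M$, a nonzero function attaining $\frac{r}{n-\ga r}$ in $\CKNP$. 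Hence (a) holds.

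Since both implications merely chain Theorem \ref{Maintheorem3} with elementary differential geometry, there is no serious obstacle. The single point demanding care is the curvature extraction in the first implication: one must confirm that the \emph{local} isometry produced by Theorem \ref{Maintheorem3} forces the \emph{full} sectional curvature at $P$ to vanish, not merely some averaged quantity such as $\Ric(u,u)$. Invoking invariance of the curvature tensor under isometries settles this at once, while the $J$--expansion argument closes the gap through the nonpositivity of $K_M$ on a Cartan--Hadamard manifold.
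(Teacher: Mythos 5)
Your proposal is correct and follows essentially the same route as the paper: apply Theorem \ref{Maintheorem3} at each point $P$ to get a geodesic ball isometric to a Euclidean ball, conclude $K_M$ vanishes at $P$ (hence everywhere), and for the converse use that a flat Cartan--Hadamard manifold is isometric to $\R^n$, where Corollary \ref{myCKNfull} supplies the extremals. Your write-up merely makes explicit two points the paper leaves implicit --- the curvature extraction via invariance of the curvature tensor (or the $J$-expansion) and the (b) $\Rightarrow$ (a) direction --- both of which you handle correctly.
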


\section{Rigidity results on manifolds with non-negative Ricci curvature: Proof of Theorems \ref{Maintheorem2} and \ref{Maintheorem4}}
This section is devoted to prove the rigidity results in Theorem \ref{Maintheorem2} and \ref{Maintheorem4}. The main idea in our proof goes back to the work of Ledoux on sharp Sobolev inequality \cite{Ledoux} and then developed by several works \cite{Druet,Bakry,K1,K2,KO,Xia,Xiasobolev,CarXia,XiaGN,Minerbe,Cheeger,LiWang}. The crucial ingredient is the explicit form of extremal in the Euclidean spaces. Exploiting this form of extremal, we define a new function in $(M,g)$ which depends only on $d_P$ and then applying the CKN inequality to obtain a differential inequality. Using this differential inequality, we obtain the equality in the Bishop--Gromov volume comparison theorem, and hence obtain the desired result. It is worthy to emphasize here that the extremal of CKN inequalities considered in Theorem \ref{Maintheorem4} are compactly supported functions. This arises several difficulties in the proof of Theorem \ref{Maintheorem4}. %In the knowledge of author, this is the first time such a rigidity result is established for the Sobolev type inequalities with compactly supported extremal (comparing with \cite{Ledoux,Bakry,Druet,K1,K2,KO,Xia,Xiasobolev,CarXia,XiaGN,Cheeger,Minerbe,LiWang}).

\begin{proof}[Proof of Theorem \ref{Maintheorem2}]
The implications $(c) \Rightarrow (b) \Rightarrow (a)$ are trivial by Corollary \ref{myCKNfull}. It remains to verify $(a) \Rightarrow (c)$. For $\lam >0$, define the function
\[
T(\lam) = \int_{\R^n} e^{-p \lam |x|^{1+ \al -\frac\beta p}} |x|^{-\ga p}dx.
\]
We can check that
\[
T(\lam) = \lam^{-\frac{n-\ga p}{1+ \al -\frac\beta p}} p^{-\frac{n-\ga p}{1+ \al -\frac\beta p}} \frac{n\om_n}{1+ \al -\frac\beta p} \Gamma\lt(\frac{n-\ga p}{1+ \al -\frac\beta p}\rt),
\]
and hence $T$ satisfies the equation
\begin{equation}\label{eq:ODET}
-\lam T'(\lam) = \frac{n-\ga p}{1+ \al -\frac\beta p} T(\lam), \quad \lam >0.
\end{equation}
Let $P\in M$ be fixed. Since $\CKNP$ holds, then $(M,g)$ cannot be compact. For $\lam >0$, we define
\[
u_\lam (x) = e^{-\lam d_P(x)^{1+ \al -\frac\beta p}},\quad \lam >0.
\]
By a simple approximation procedure, we can apply $\CKNP$ to function $u_\lam$ and then obtain the following inequality (note that $|\na d_P| =1$)
\begin{equation}\label{eq:useCKN}
\int_M e^{-p\lam d_P^{1+ \al -\frac\beta p}} d_P^{-\ga p} dV_g \leq \lam \frac{p(1+ \al -\frac\be p)}{n-\ga p}\int_M e^{-p\lam d_P^{1+ \al -\frac\beta p}} d_P^{-\beta} dV_g, \quad \lam >0.
\end{equation}
Define 
\[
F(\lam) = \int_M e^{-p \lam d_P^{1+ \al -\frac\beta p}} d_P^{-\ga p} dV_g.
\]
Using Bishop--Gromov comparison theorem, we can easily check that $0 < F(\lam) < \infty$ for any $\lam >0$ and $F$ is differentiable on $(0,\infty)$. Moreover, we can compute that
\[
F'(\lam) = -p\int_M e^{-p \lam d_P^{1+ \al -\frac\beta p}} d_P^{-\beta} dV_g,
\]
and hence
\begin{equation}\label{eq:ODEF}
-\lam F'(\lam) \geq \frac{n-\ga p}{1+ \al -\frac{\be}p} F(\lam),\quad \lam >0.
\end{equation}
Combining \eqref{eq:ODET} and \eqref{eq:ODEF}, we get $(F/T)' \leq 0$ hence the function $\lam \to \frac{F(\lam)}{T(\lam)}$ is non-increasing. In particular, for any $\lam >0$,
\begin{equation}\label{eq:limitinfinity}
\frac{F(\lam)}{T(\lam)} \geq \lim_{\lam \to \infty} \frac{F(\lam)}{T(\lam)}.
\end{equation}

We next make an estimate of $F(\lam)$ for $\lam >0$ large enough. A traditional way is to use the layer cake representation
\[
F(\lam) = \int_0^\infty\Vol_g \lt(\{x\in M\, :\, e^{-p \lam d_P^{1+ \al -\frac\beta p}} d_P^{-\ga p} > t\}\rt) dt,
\]
and then making the change of variable $t= e^{-p\lam  s^{1+ \al -\be/p}}s^{-\ga p}$. It seems that this argument does not work for $\ga < 0$ since the function $s \to e^{-p\lam  s^{1+ \al -\be/p}}s^{-\ga p}$ is not decreasing monotone on $[0,\infty)$. The same situation also appeared in the proof of Theorem $1.3$ of Xia \cite{Xia}. Instead of using the layer cake representation, we will use the polar coordinate \eqref{eq:polar}. For $0< \ep < \inf\{\rho(u)\, :\, u\in S^{n-1}\}$ we have
\begin{align*}
F(\lam) &=  \int_{S^{n-1}}\int_0^{\rho(u)} e^{-p \lam t^{1+ \al -\frac\be p}} J(u,t) t^{n-\ga p -1} dt du\\
&\geq \int_0^\ep\int_{S^{n-1}} e^{-p \lam t^{1+ \al -\frac\be p}} J(u,t) t^{n-\ga p -1} dt du\\
&=(1+ O(\ep)^2) n\om_n \int_0^\ep e^{-p \lam t^{1+ \al -\frac\be p}} t^{n-\ga p -1} dt du\\
&=(1+ O(\ep)^2) \lam^{-\frac{n-\ga p}{1+ \al -\frac\beta p}} p^{-\frac{n-\ga p}{1+ \al -\frac\beta p}} \frac{n\om_n}{1+ \al -\frac\beta p} \int_0^{(p\lam)^{\frac1{1+ \al -\be/p}}\ep} e^{-t} t^{n-\ga r -1} dt.
\end{align*}
Consequently, we obtain
\[
\lim_{\lam \to \infty} \frac{F(\lam)}{T(\lam)} \geq 1+ O(\ep^2).
\]
Letting $\ep \to 0$, we get
\[
\frac{F(\lam)}{T(\lam)} \geq \lim_{\lam \to \infty} \frac{F(\lam)}{T(\lam)} \geq 1.
\]
Hence $F(\lam) \geq T(\lam) $ for any $\lam >0$. In the other hand, for any $u\in S^{n-1}$, we have $J(u,t) \leq 1$ for any $t < \rho(u)$ (see, e.g., \cite[p. $172$, line $5$]{GHL}). Hence, using again the polar coordinate \eqref{eq:polar}, we obtain
\begin{align*}
T(\lam) \leq F(\lam) &= \int_{S^{n-1}}\int_0^{\rho(u)}  e^{-p\lam t^{1+ \al -\frac\beta p}}t^{n-\ga p -1} J(u,t) dt du\\
&\leq \int_{S^{n-1}}\int_0^{\rho(u)}  e^{-p\lam t^{1+ \al -\frac\beta p}}t^{n-\ga p -1} dt du\\
&\leq \int_{S^{n-1}}\int_0^{\infty}  e^{-p\lam t^{1+ \al -\frac\beta p}}t^{n-\ga p -1} dt du\\
&=T(\lam).
\end{align*}
Consequently, we get $F(\lam) =T(\lam)$ for any $\lam >0$. Thus all inequalities in the previous estimates are equalities. This implies that $\rho(u) =\infty$ for almost $u\in S^{n-1}$ and $J(u,t) \equiv 1$ for any $t < \rho(u)$. This together with the polar coordinate \eqref{eq:polar} implies $\Vol_g(B(P,r)) = \om_n r^n$ for any $r>0$. The equality condition in Bishop--Gromov volume comparison theorem (see \cite[Theorem $III.4.4$]{Chavel}) implies that $M$ is isometric to $\R^n$
\end{proof}

We next move to the proof of Theorem \ref{Maintheorem4}. We will need the following simple result.
\begin{proposition}\label{key}
Let $(\Om, \mu)$ be a measure space and $f: \Om\to [0,\infty)$ be a measurable function. Given $q \in (0,1)\cup (1,\infty)$ and suppose that
\begin{equation}\label{eq:keygt}
\int_{\{f < \lam\}} (\lam -f)^{q-1} d\mu < \infty
\end{equation}
for any $\lam >0$. Suppose, in addition, that 
\begin{equation}\label{eq:keygt2}
\mu(\{\lam \leq f < \lam +h\}) =\begin{cases}
 O(h) &\mbox{if $q\in (0,1)$}\\
O(1) &\mbox{if $q >1$}
\end{cases}
\end{equation}
for $\lam, h >0$. Then the function $G: \lam \to \int_\Om (\lam -f)_+^q d\mu$ is differentiable on $(0,\infty)$ and
\begin{equation}\label{eq:daohamG}
G'(\lam) = q \int_{\{f< \lam\}} (\lam -f)^{q-1} d\mu.
\end{equation}
\end{proposition}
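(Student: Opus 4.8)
The plan is to represent $G$ as an indefinite integral of the candidate derivative and then reduce the whole statement to a continuity assertion. Set $g(s)=q\int_{\{f<s\}}(s-f)^{q-1}\,d\mu$, which is finite for every $s>0$ by \eqref{eq:keygt}. Since $f\ge 0$, the map $s\mapsto(s-f(x))_+^q$ is absolutely continuous on $[0,\lambda]$ with a.e.-derivative $q(s-f(x))_+^{q-1}$ (the singularity at $s=f(x)$ is integrable in $s$ because $q-1>-1$), so the fundamental theorem of calculus gives
\[
(\lambda-f(x))_+^q=\int_0^\lambda q\,(s-f(x))_+^{q-1}\,ds,\qquad x\in\Om.
\]
Integrating in $x$ and invoking Tonelli's theorem on the non-negative integrand yields $G(\lambda)=\int_0^\lambda g(s)\,ds$; note also that $G(\lambda)\le\lambda\int_{\{f<\lambda\}}(\lambda-f)^{q-1}\,d\mu<\infty$ because $\lambda-f\le\lambda$ on $\{f<\lambda\}$. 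Consequently, once I show that $g$ is continuous on $(0,\infty)$, the fundamental theorem of calculus gives that $G$ is differentiable with $G'=g$, which is exactly \eqref{eq:daohamG}.

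It remains to prove continuity of $\Phi(\mu_0):=g(\mu_0)/q=\int_{\{f<\mu_0\}}(\mu_0-f)^{q-1}\,d\mu$ at a fixed $\lambda>0$. The natural device is an $\varepsilon$-splitting of the domain into a \emph{far} part $\{f<\mu_0-\varepsilon\}$ and a \emph{near} part $\{\mu_0-\varepsilon\le f<\mu_0\}$. On the far part the weight is bounded uniformly for $\mu_0$ in a neighbourhood of $\lambda$: for $q\in(0,1)$ one has $(\mu_0-f)^{q-1}\le\varepsilon^{q-1}$ and for $q>1$ one has $(\mu_0-f)^{q-1}\le\mu_0^{q-1}$. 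Moreover \eqref{eq:keygt} forces $\mu(\{f<\mu_0-\varepsilon\})<\infty$, since on that set the weight is bounded below by $\varepsilon^{q-1}$ (resp.\ by a positive constant for $q>1$) while its integral is finite. Hence the far part is an integral of a bounded function over a finite-measure set, and bounded convergence shows $\Phi^{\mathrm{far}}_\varepsilon(\mu_0)\to\Phi^{\mathrm{far}}_\varepsilon(\lambda)$ as $\mu_0\to\lambda$ for each fixed $\varepsilon$.

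The near part is the main obstacle, and controlling it is precisely the role of hypothesis \eqref{eq:keygt2}, because for $q\in(0,1)$ the weight $(\mu_0-f)^{q-1}$ is singular as $f\uparrow\mu_0$. I would estimate $\Phi^{\mathrm{near}}_\varepsilon(\mu_0)=\int_{\{\mu_0-\varepsilon\le f<\mu_0\}}(\mu_0-f)^{q-1}\,d\mu$ uniformly in $\mu_0$ near $\lambda$. For $q\in(0,1)$, writing $a^{q-1}=(1-q)\int_a^\infty s^{q-2}\,ds$ and applying Tonelli reduces the bound to the layer measures $\mu(\{\mu_0-s\le f<\mu_0\})=O(s)$ furnished by \eqref{eq:keygt2}, and one obtains $\Phi^{\mathrm{near}}_\varepsilon(\mu_0)=O(\varepsilon^{q})$. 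For $q>1$ the cruder bound $(\mu_0-f)^{q-1}\le\varepsilon^{q-1}$ together with $\mu(\{\mu_0-\varepsilon\le f<\mu_0\})=O(1)$ from \eqref{eq:keygt2} gives $\Phi^{\mathrm{near}}_\varepsilon(\mu_0)=O(\varepsilon^{q-1})$. In either case the bound is uniform for $\mu_0$ near $\lambda$ and tends to $0$ as $\varepsilon\to0$.

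Combining the two estimates, for $\mu_0$ near $\lambda$ one has $|\Phi(\mu_0)-\Phi(\lambda)|\le|\Phi^{\mathrm{far}}_\varepsilon(\mu_0)-\Phi^{\mathrm{far}}_\varepsilon(\lambda)|+\Phi^{\mathrm{near}}_\varepsilon(\mu_0)+\Phi^{\mathrm{near}}_\varepsilon(\lambda)$. Taking $\mu_0\to\lambda$ kills the first term, leaving a quantity of order $\varepsilon^{q}$ (resp.\ $\varepsilon^{q-1}$); letting $\varepsilon\to0$ then forces $\Phi(\mu_0)\to\Phi(\lambda)$, so $g$ is continuous on $(0,\infty)$. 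By the first paragraph this yields differentiability of $G$ and the formula \eqref{eq:daohamG}, completing the proof. The delicate point throughout is that the singular weight $(\lambda-f)^{q-1}$ for $q\in(0,1)$ must be tamed near $\{f=\lambda\}$, and it is exactly the layer-measure control \eqref{eq:keygt2} that makes the near contribution negligible.
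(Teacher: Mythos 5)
Your argument is correct in substance, but it takes a genuinely different route from the paper's. The paper differentiates $G$ directly: for $h>0$ it splits the difference quotient into $\int_{\{f<\lambda\}}\frac{(\lambda+h-f)^q-(\lambda-f)^q}{h}\,d\mu$, controlled by a pointwise inequality of the form $(a+b)^q-a^q\le C a^{q-1}b$ together with \eqref{eq:keygt} and dominated convergence, plus the boundary term $\frac1h\int_{\{\lambda\le f<\lambda+h\}}(\lambda+h-f)^q\,d\mu\le h^{q-1}\mu(\{\lambda\le f<\lambda+h\})$, which \eqref{eq:keygt2} kills; for $h<0$ it uses the companion inequality $a^q-(a-b)_+^q\le Ca^{q-1}b$ and dominated convergence, and then matches the two one-sided derivatives. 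You instead represent $G$ as the primitive of the candidate derivative, $G(\lambda)=\int_0^\lambda g(s)\,ds$ (pointwise fundamental theorem of calculus plus Tonelli), and reduce the proposition to continuity of $g$, proved by a far/near splitting. Your route yields slightly more ($G$ is $C^1$, not merely differentiable), but it consumes slightly more of hypothesis \eqref{eq:keygt2}: you invoke the layer bound at all levels $\mu_0-s$ with $\mu_0$ near $\lambda$ and $s\le\varepsilon$, so the implicit constant must be (locally) uniform in the level, whereas the paper needs \eqref{eq:keygt2} only at the single level $\lambda$ as $h\to0^+$. Under the natural uniform reading of \eqref{eq:keygt2} (which is what holds in the paper's application, where $J(u,t)\le1$), this is fine.

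Three small repairs are needed. First, Tonelli requires $\sigma$-finiteness; this holds because the integrand lives on $\{f<\lambda\}\times(0,\lambda)$ and $\{f<\lambda\}$ is $\sigma$-finite by \eqref{eq:keygt}: for $q>1$ each $\{f<\lambda-1/k\}$ has finite measure since the weight is $\ge k^{1-q}$ there, and for $q\in(0,1)$ the whole set $\{f<\lambda\}$ has finite measure since $f\ge0$ gives $(\lambda-f)^{q-1}\ge\lambda^{q-1}$. Second, this last remark also corrects your finiteness argument for the far set: for $q\in(0,1)$ the weight on $\{f<\mu_0-\varepsilon\}$ is bounded \emph{above} by $\varepsilon^{q-1}$ and \emph{below} by $\mu_0^{q-1}$ — you stated the lower bound as $\varepsilon^{q-1}$, which is the wrong direction, though the conclusion $\mu(\{f<\mu_0-\varepsilon\})<\infty$ survives. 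Third, in the far part, bounded convergence as $\mu_0\downarrow\lambda$ gives the limit $\int_{\{f\le\lambda-\varepsilon\}}(\lambda-f)^{q-1}\,d\mu$ rather than $\Phi^{\mathrm{far}}_\varepsilon(\lambda)$, because the indicators $\mathbf{1}_{\{f<\mu_0-\varepsilon\}}$ converge pointwise to $\mathbf{1}_{\{f\le\lambda-\varepsilon\}}$; the discrepancy is $\varepsilon^{q-1}\mu(\{f=\lambda-\varepsilon\})$. This is harmless — for $q\in(0,1)$ the $O(h)$ bound in \eqref{eq:keygt2} forces every level set of $f$ to be $\mu$-null, and for $q>1$ the discrepancy is $O(\varepsilon^{q-1})$ by the $O(1)$ bound and so vanishes in your final limit $\varepsilon\to0$ — but it should be stated.
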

\begin{proof}
Denote $A_\lam = \{f < \lam\}$. For $h >0$, we have
\begin{equation*}
\frac{G(\lam +h) -G(\lam)}h = \int_{A_\lam} \frac{(\lam +h -f)^q -(\lam -f)^q}h d\mu + \frac1h\int_{\{\lam\leq f< \lam+h\}} (\lam+ h-f)^q d\mu.
\end{equation*}
It is easy to check that 
\[
(a+b)^q - a^q \leq C a^{q-1} b\, \quad a, b>0,
\]
for some constant $C > 0$ is. Combining the previous inequality with \eqref{eq:keygt} and the Lebesgue dominated convergence theorem, we get
\[
\lim_{h\to 0^+} \int_{A_\lam} \frac{(\lam +h -f)^q -(\lam -f)^q}h d\mu =q \int_{\{f< \lam\}} (\lam -f)^{q-1} d\mu.
\]
In the other hand, we have $\frac1h\int_{\{\lam\leq f< \lam+h\}} (\lam+ h-f)^q d\mu \leq h^{q-1} \mu(\{\lam \leq f < \lam+h\})$. The assumption \eqref{eq:keygt2} implies 
\[
\lim_{h\to 0^+} \frac1h\int_{\{\lam\leq f< \lam+h\}} (\lam+ h-f)^q d\mu =0.
\]
Thus, we get 
\begin{equation}\label{eq:daohamphai}
\lim_{h\to 0^+} \frac{G(\lam+ h) -G(\lam)}{h} = q \int_{\{f< \lam\}} (\lam -f)^{q-1} d\mu.
\end{equation}

For $h < 0$, we have
\[
\frac{G(\lam + h) -G(\lam)}h = \int_{A_\lam} \frac{(\lam +h -f)_+^q -(\lam -f)^q}h d\mu.
\]
We claim that
\[
a^q - (a-b)_+^q \leq C a^{q-1} b,\quad a, b>0.
\]
for some constant $C >0$. Indeed, if $b\geq a/2$ then 
\[
a^q -(a-b)_+^q \leq a^q \leq 2 a^{q-1} h.
\]
If $0< b < a/2$, denote $t = b/ a \in (0,1/2)$, we have
\[
a^q -(a-b)^q = a^q(1 - (1-t)^q) \leq C a^q t = Ca^{q-1} b,
\]
for some constant $C >0$, here we use $\lim_{t\to 0^+} \frac{1 -(1-t)^q}t = q$ and $t \in (0,1/2)$. Hence our claim has been proved. Our claim together with \eqref{eq:keygt} and the Lebesgue dominated convergence theorem, we get
\begin{equation}\label{eq:daohamtrai}
\lim_{h\to 0^-} \int_{A_\lam} \frac{(\lam +h -f)^q -(\lam -f)^q}h d\mu =q \int_{\{f< \lam\}} (\lam -f)^{q-1} d\mu.
\end{equation}
Combining \eqref{eq:daohamphai} and \eqref{eq:daohamtrai} finishes the proof of this proposition. 
\end{proof}
\begin{proof}[Proof of Theorem \ref{Maintheorem4}]
The implications $(c) \Rightarrow (b) \Rightarrow (a)$ are trivial by Corollary \ref{myCKNfull}. It remains to proved $(a) \Rightarrow (c)$. In the sequel, we prove only for the case $0< r < p$ and $1+ \al -\frac\beta p >0$. The proof in the other cases is completely similar.

For $\lam >0$, define
\[
T(\lam) = \int_{\R^n}\lt(\lam -|x|^{1+ \al -\frac\beta p}\rt)_+^{r\frac{p-1}{p-r}} |x|^{-r \ga} dx.
\]
A straightforward computation shows that
\[
T(\lam) = \lam^{\frac{n-r \ga}{1+ \al -\frac\be p}+ \frac{r(p-1)}{p-r}} n\om_n \frac1{1+ \al -\frac\be p} B\lt(\frac{r(p-1)}{p-r} + 1, \frac{n-\ga r}{1+ \al -\frac\be p}\rt)
\]
where $B$ denotes the usual beta function. Denote $\de = \frac{n-r \ga}{1+ \al -\frac\be p}+ \frac{r(p-1)}{p-r}$ for short, then it is evident that 
\begin{equation}\label{eq:ODETa}
\lam T'(\lam) =\de T(\lam), \quad \lam >0.
\end{equation}

Let $P\in M$ be fixed. Since $\CKNP$ holds at $P$, then $M$ can not be compact. For $\lam >0$, define the function $u_\lam$ on $M$ by
\[
u_\lam(x) = \lt(\lam -d_P(x)^{1+ \al -\frac\be p}\rt)_+^{\frac{p-1}{p-r}},
\]
and denote
\[
F(\lam) = \int_M \frac{u_\lam^r}{d_P^{\ga r}} dV_g= \int_M \lt(\lam -d_P(x)^{1+ \al -\frac\be p}\rt)_+^{\frac{r(p-1)}{p-r}} d_P(x)^{-\ga r}dV_g.
\]
By a simple approximation argument, we can apply $\CKNP$ for this function $u_\lam$ and obtain for any $\lam >0$,
\begin{equation}\label{eq:BDTforT}
F(\lam) \leq \frac{r}{n-\ga r}\lt(1+ \al -\frac\be p\rt) \frac{p-1}{p-r} \int_{\{d_P < \lam^{\frac1{1+\al -\frac\be p}}\}} \lt(\lam -d_P^{1+ \al -\frac\be p}\rt)^{\frac{p(r-1)}{p-r}} d_P^{-\be} dV_g,
\end{equation}
here we use $|\na d_P| =1$. Since $J(u,t) \leq 1$ for any $t < \rho(u)$ then the assumptions in Proposition \ref{key} satisfy for $f = d_P^{1+ \al -\frac\beta p}$, $(\Om, \mu) =(M,V_g)$ and $q =r(p-1)/(p-r) \in (0,1)\cup (1,\infty)$. As consequence of Proposition \ref{key}, $F$ is differentiable, and
\[
F'(\lam) = \frac{r(p-1)}{p-r} \int_{\{d_P < \lam^{\frac1{1+\al -\frac\be p}}\}}\lt(\lam -d_P^{1+ \al -\frac\be p}\rt)^{\frac{p(r-1)}{p-r}} d_P^{-\ga r} dV_g.
\]
Recall that $\ga r = 1+ \al -\frac\be p + \be$. By an easy computation, we get
\begin{align*}
\lam F'(\lam) & = \frac{r(p-1)}{p-r}\int_{\{d_P < \lam^{\frac1{1+\al -\frac\be p}}\}}\lam \lt(\lam -d_P^{1+ \al -\frac\be p}\rt)^{\frac{p(r-1)}{p-r}} d_P^{-\ga r} dV_g\\
&= \frac{r(p-1)}{p-r}\int_{M} \lt(\lam -d_P^{1+ \al -\frac\be p}\rt)^{\frac{r(p-1)}{p-r}} d_P^{-\ga r} dV_g\\
&\quad + \frac{r(p-1)}{p-r}  \int_{\{d_P < \lam^{\frac1{1+\al -\frac\be p}}\}} \lt(\lam -d_P^{1+ \al -\frac\be p}\rt)^{\frac{p(r-1)}{p-r}} d_P^{-\be} dV_g.
\end{align*}
This together with \eqref{eq:BDTforT} yields
\begin{equation}\label{eq:BDTviphanF}
\lam F'(\lam) \geq \de F(\lam),\quad \lam >0,\, \, \de =\frac{n-r \ga}{1+ \al -\frac\be p}+ \frac{r(p-1)}{p-r}.
\end{equation}
From \eqref{eq:ODETa} and \eqref{eq:BDTviphanF}, we get $F'T -T'F \geq 0$ or the function $\lam \to \frac{F(\lam)}{T(\lam)}$ is non-decreasing on $(0,\infty)$. In the other hand, for $0< \lam^{\frac1{1+ \al -\frac\be p}} < \inf\{\rho(u)\,:\, u\in S^{n-1}\}$, we have by using polar coordinate \eqref{eq:polar}
\begin{align*}
F(\lam) &= \int_{S^{n-1}}\int_0^{\rho(u)} \lt(\lam -t^{1+ \al -\frac\be p}\rt)_+^{\frac{r(p-1)}{p-r}} t^{n-\ga r -1} J(u,t) dt du\\
&= \int_{S^{n-1}}\int_0^{\lam^{\frac1{1+ \al -\frac\be p}}} \lt(\lam -t^{1+ \al -\frac\be p}\rt)_+^{\frac{r(p-1)}{p-r}} t^{n-\ga r -1} J(u,t) dt du\\
&=(1+ O(\lam^{\frac2{1+ \al -\frac\be p}})) n\om_n \int_0^{\lam^{\frac1{1+ \al -\frac\be p}}} \lt(\lam -t^{1+ \al -\frac\be p}\rt)_+^{\frac{r(p-1)}{p-r}} t^{n-\ga r -1} dt\\
&=(1+ O(\lam^{\frac2{1+ \al -\frac\be p}})) T(\lam).
\end{align*}
Thus, we obtain
\[
\lim_{\lam \to 0^+} \frac{F(\lam)}{T(\lam)} =1,
\]
which together with the non-decreasing monotonicity of $\frac{F}T$ immediately implies 
\[
F(\lam) \geq T(\lam),\qquad \lam >0.
\]
In the other hand, we have by using polar coordinate \eqref{eq:polar} and $J(u,t)\leq 1$ for $t < \rho(u)$ that
\begin{align*}
T(\lam) \leq F(\lam) &= \int_{S^{n-1}}\int_0^{\rho(u)} \lt(\lam -t^{1+ \al -\frac\be p}\rt)_+^{\frac{r(p-1)}{p-r}} t^{n-\ga r -1} J(u,t) dt du\\
&=\int_{S^{n-1}}\int_0^{\min\{\rho(u),\lam^{\frac2{1+ \al-\frac\be p}}\}} \lt(\lam -t^{1+ \al -\frac\be p}\rt)_+^{\frac{r(p-1)}{p-r}} t^{n-\ga r -1} J(u,t) dt du\\
&\leq \int_{S^{n-1}}\int_0^{\min\{\rho(u),\lam^{\frac2{1+ \al-\frac\be p}}\}} \lt(\lam -t^{1+ \al -\frac\be p}\rt)_+^{\frac{r(p-1)}{p-r}} t^{n-\ga r -1} dt du\\
&\leq \int_{S^{n-1}}\int_0^{\lam^{\frac2{1+ \al-\frac\be p}}} \lt(\lam -t^{1+ \al -\frac\be p}\rt)_+^{\frac{r(p-1)}{p-r}} t^{n-\ga r -1} dt du\\
&=T(\lam).
\end{align*}
Consequently, we get $F(\lam) =T(\lam)$ for any $\lam >0$ which then implies all inequalities in the previous estimates to be equalities. Thus, for almost $u\in S^{n-1}$ we have $\rho(u) \geq \lam^{\frac2{1+ \al -\frac\be p}}$ for any $\lam >0$. Hence, for almost $u\in S^{n-1}$, we have $\rho(u) = \infty$. Moreover, for such a $u\in S^{n-1}$ we have $J(u,t) =1$ for almost $t>0$ (which ensures the equality in the first inequality in the estimates above). By the continuity, we have $J(u,t) =1$ for any $t >0$. Using again the polar coordinate \eqref{eq:polar}, we get, for any $r>0$, $\Vol_g(B(P,r)) = \om_n r^n$. By the equality condition in Bishop--Gromov volume comparison principle theorem (see \cite[Theorem $III.4.4$]{Chavel}), $M$ is isometric to $\R^n$.
\end{proof}

\section{Acknowledgments}
The author would like to thank Professor Alexandru Krist\'aly for drawing our attentions to his works \cite{K1,K2,KO}. This work was supported by the CIMI's postdoctoral research fellowship.

\end{document}